\definecolor{webgreen}{rgb}{0,.5,0}
\definecolor{webbrown}{rgb}{.6,0,0}
\newcommand{\seqnum}[1]{\href{https://oeis.org/#1}{\rm \underline{#1}}}
\begin{document}

\theoremstyle{plain}
\newtheorem{theorem}{Theorem}
\newtheorem{corollary}[theorem]{Corollary}
\newtheorem{lemma}[theorem]{Lemma}
\newtheorem{proposition}[theorem]{Proposition}

\theoremstyle{definition}
\newtheorem{definition}[theorem]{Definition}
\newtheorem{example}[theorem]{Example}
\newtheorem{conjecture}[theorem]{Conjecture}

\theoremstyle{remark}
\newtheorem{remark}[theorem]{Remark}

\begin{center}
\vskip 1cm{\LARGE\bf Analytical Study and Efficient Evaluation of the Josephus Function 
} \vskip 1cm \large
Yunier Bello-Cruz\footnote{The first author was partially supported by NSF Grant DMS--2307328 and the internal research and artistry (R\&A) grant at Northern Illinois University.} and Roy Quintero-Contreras\footnote{The second author wishes to thank the Shippensburg University of Pennsylvania for its online public source ``The Josephus Game'', which was used hundreds of times to check out values of the Josephus function along the first stages of this research, and wants to dedicate this paper to his \textit{alma mater} ``Universidad Central de Venezuela'' (UCV) for its $300$th anniversary of creation.}\\
Department of Mathematical Sciences\\
Northern Illinois University\\
DeKalb, IL   60115 \\
USA\\
\href{mailto:yunierbello@niu.edu}{\tt yunierbello@niu.edu}\\
\href{mailto:rquinterocontreras@niu.edu}{\tt rquinterocontreras@niu.edu}

\end{center}
\vskip .2 in

\begin{abstract}
A new approach to analyzing intrinsic properties of the Josephus function, $J_{_k}$, is presented in this paper. The linear structure between extremal points of $J_{_k}$ is fully revealed, leading to the design of an efficient algorithm for evaluating $J_{_k}$. Algebraic expressions that describe how recursively compute extremal points, including fixed points, are derived. The existence of consecutive extremal and also fixed points for all $k\geq 2$ is proven as a consequence, which generalizes Knuth's result for $k=2$. Moreover, an extensive comparative numerical experiment is conducted to illustrate the performance of the proposed algorithm for evaluating the Josephus function compared to established algorithms. The results show that the proposed scheme is highly effective in computing $J_{_k}(n)$ for large inputs.
\end{abstract}

\section{Introduction}

The Josephus problem is a game of elimination that has been studied for nearly two millennia. The earliest known formulation of the problem appears in the historical text \cite{Jos} written by historian Flavius Josephus. Josephus described a method of serial elimination by casting lots, which he and $40$ of his soldiers applied while trapped in a cave by the Roman army and facing imminent capture and inevitable massacre.  

The general formulation of the problem is as follows: a certain number of people $n$ is arranged in a circle, and an execution method is established. An initial counting point and a direction of rotation are fixed, and after $k-1$ number of people is counted, the next person is executed and removed from the circle. This procedure is repeated with the remaining people until only one person remains, who is released. The objective of this problem is to determine the position of the survivor $J_{_k}(n)\in \{1,2,\ldots,n\}$ in the initial circle. 

The Josephus problem received little attention throughout the first millennium, but new formulations were proposed in the second millennium, such as the rhyming games traced by Oring \cite{Ori}. In the following centuries, more specific versions of the problem emerged, and it was integrated into a certain category of puzzles or riddles. One notable example is Bachet's presentation \cite{Bac}, where was established a formal procedure to solve the case $k=3$ and $n=41$ with total accuracy. In $1776$, Euler heuristically found a recursive formula that connects the survivor's position for a given number of condemned with the position of the corresponding survivor when one more condemned is added; see \cite{Eul}. This was a significant development in the mathematical treatment of the problem, as it required logical argumentation, appropriate symbolic notation, and exploration. However, the computational aspect of the problem has received more attention in recent years, as the advancement of technology allows for the efficient calculation of the survivor's position $J_{_k}(n)$ for a large number of people $n$ and a fixed $k$. In \cite{Tai}, Tait presents some discussion and figures in this direction, highlighting the importance of fast computational methods for solving the problem. Over time, the problem has attracted the attention of mathematicians from various areas, and the study of the problem was recently extended to the field of permutations. Mathematicians such as Herstein, Kaplansky, Ball, and others studied the problem in group theory and algorithmically; see, for instance, \cite{Dow, Cha, Her, Rou, Arh, Wil}. The Josephus problem also has various modern applications such as computer algorithms, data structures, and image encryption; see, for instance, \cite{Woo,Cos,Nai,Yan}.  When the number of jumps, $k-1$, is equal to one, Knuth derived a closed-form expression for $J_{_2}(n)$ as $2n - 2^{\lfloor \log_2{n} \rfloor + 1} + 1$ and developed an efficient algorithm for evaluating $J_{_k}$ for arbitrary values of $k$; see, for instance, \cite{Knuth}. This algorithm avoids the recursive nature of the Josephus function and is presented in \cite{Gra}. We invite the reader interested in delving deeper into the historical origin of this problem and applications to review references such as \cite{Odl, Hal, Rob, Rec, Smi, Pet, New} and \cite[Appendix]{Gro}.

This paper introduces a new approach for analyzing the intrinsic properties of the discrete Josephus function, $J_{_k}$. We formulate algebraic expressions that describe and characterize all extremal points of $J_{_k}$ and include recurrence formulas to compute low and high extremal points. Additionally, we prove the existence of consecutive extremal and fixed points for all $k\geq 3$, generalizing Knuth's result for $k=2$; see \cite{Gra}. Moreover, by revealing the discrete and piecewise linear structure of $J_{_k}$ between extremal points, we design an efficient algorithm for evaluating $J_{_k}(n)$ for large $n$ and a given reduction constant $k$. A comparative computational study is conducted at the last section of the paper to evaluate the performance of the proposed algorithm against established methods, such as Euler/Woodhouse \cite{Eul, Woo}, Knuth \cite{Gra}, and Uchiyama \cite{Uch}. The results of the numerical comparison indicate that the proposed scheme is highly effective for computing $J_{_k}(n)$ for large inputs $k$ and $n$.

\subsection{Notation and Definitions} 

The mathematical formulation of the classical Josephus problem can be stated as follows: Let $n$ be the number of people arranged in a circle which closes up its ranks as individuals are picked out. Starting anywhere (person 1st spot), go sequentially around clockwise, picking out each $k$th person (this number $k$ is called the \textit{reduction constant}) until but one person is left (this person is called the \textit{survivor}). The position of the survivor is denoted by $J_{_k}(n)$, which belongs to the natural numbers $\mathbb{N}$. This procedure is called the \textit{elimination process}, and it naturally generates a discrete function $J_{_k}: \mathbb{N} \rightarrow \mathbb{N}$ for each $k \geq 2$ that we will call the Josephus function. Given $n \geq 1$ and $k \geq 2$ integers, we say that the Josephus problem has been solved once we have determined the value of $J_{_k}$ at $n$. For any two integers $\ell$ and $m$ such that $\ell\le m$, the set $\{\ell,\ldots,m\}$ will be denoted by $[[\ell, m]]$. Notice further that $J_{_k}(n) \in [[1,n]]$ for every $n$.

\begin{definition}[Fixed and Extremal points]
A \textit{fixed point} of $J_{_k}$ is a value $n_{_p}$ such that $n_{_p}=J_{_k}(n_{_p})$. Additionally, an \textit{extremal point} $n_{_e}$ is defined as a point that satisfies either $J_{_k}(n_{_e})\in [[1,k-1]]$ or $J_{_k}(n_{_e})\in [[n_{_e}-k+2,n_{_e}]]$. If $J_{_k}(n_{_e})\in [[1,k-1]]$, we refer to $n_{_e}$ as a \textit{low extremal point}. On the other hand, if $J_{_k}(n_{_e}) \in [[n_{_e}-k+2,n_{_e}]]$, we refer to $n_{_e}$ as a \textit{high extremal point}. 
\end{definition}
Note that, a fixed point $n_{_p}$ is also a high extremal point because $J_{_{k}}(n_{_p})=n_{_p}$. However, there are high extremal points that are not fixed points; see Figure \ref{f1} below.

\section{Properties of the Josephus Function} 
In this section, we begin by recalling a recursive formula, which first appeared in Euler's paper \cite[\& 8, pp. 130-131]{Eul} and establishes a way of determining $J_{_{k}}(n+1)$ in terms of $J_{_k}(n)$.

\begin{theorem}[Euler's formula]\label{T1}
Let $k \geq 2$ and denote $p := J_{_k}(n)$. Then, $J_{_k}(n+1)=p+k-\ell(n+1)$, if $p+k \in [[\ell(n+1)+1, (\ell+1)(n+1)]]$ for some non-negative integer $\ell$.
\end{theorem}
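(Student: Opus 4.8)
The plan is to recognize this statement as the classical Josephus recurrence written in an interval form, and to derive it by reducing the $(n+1)$-person elimination process to the $n$-person one after a single deletion. First I would run the elimination on the circle of $n+1$ people labelled $1,2,\ldots,n+1$: starting the count at person $1$, the first person removed is the $k$-th one counted, namely the individual at position $((k-1)\bmod(n+1))+1$. Once this person is deleted, exactly $n$ people remain and the count resumes at the position immediately clockwise from the deleted one. I would emphasize that, by the very definition of $J_{_k}$, this residual process is itself a genuine $n$-person Josephus process (same $k$, same direction), merely starting from a shifted origin; hence its survivor sits at the relabelled position $p=J_{_k}(n)$.

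The heart of the argument is then the relabeling bijection that carries positions in the reduced $n$-circle back to positions in the original $(n+1)$-circle. Taking the person just after the deleted one as new position $1$ and proceeding clockwise, I would show that new position $j$ corresponds to the old position $((k+j-1)\bmod(n+1))+1$ for $j\in[[1,n]]$. A short check confirms this map is well defined, is a bijection onto $[[1,n+1]]$ with the deleted slot removed, and in particular never lands on the removed position (which would force $j\equiv 0\pmod{n+1}$, impossible for $j\in[[1,n]]$). Applying this correspondence to the survivor, i.e. to $j=p$, yields the compact recurrence $J_{_k}(n+1)=((p+k-1)\bmod(n+1))+1$.

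It remains to rewrite this modular expression in the stated interval form. By the division algorithm there is a unique non-negative integer $\ell$ with $\ell(n+1)\le p+k-1<(\ell+1)(n+1)$, equivalently $p+k\in[[\ell(n+1)+1,(\ell+1)(n+1)]]$, namely $\ell=\lfloor(p+k-1)/(n+1)\rfloor$; for this $\ell$ one has $(p+k-1)\bmod(n+1)=p+k-1-\ell(n+1)$, so that $J_{_k}(n+1)=p+k-\ell(n+1)$, which is exactly the claimed formula.

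The main obstacle I anticipate is not the final algebra but the careful bookkeeping of the circular relabeling, especially guaranteeing that the formula remains valid when $k$ is large relative to $n+1$, so that the count wraps around the circle several times before the first deletion. Handling this uniformly is precisely what the modular reduction (and the freedom in the integer $\ell$, which may exceed $1$) accomplishes, and I would make sure the starting-point convention for who is counted first is stated unambiguously, so that the shift by $k$ in the bijection is fully justified.
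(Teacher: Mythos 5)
Your proof is correct, but note that the paper contains no proof of Theorem \ref{T1} to compare against: the authors recall it as a known result from Euler's 1776 paper and state it without proof. Your argument therefore supplies what the paper delegates to the literature, and it is the standard first-principles derivation: remove the first eliminated person (at old position $((k-1)\bmod (n+1))+1$) from the circle of $n+1$, observe that the residual process is a genuine $n$-person Josephus game whose survivor occupies relabelled position $p=J_{_k}(n)$, and transport back through the cyclic shift $j \mapsto ((k+j-1)\bmod (n+1))+1$. Your bookkeeping is sound at the two delicate points: the $n$ consecutive residues $k, k+1, \ldots, k+n-1$ modulo $n+1$ omit exactly the deleted residue $k-1 \equiv k+n$, so the relabeling is a bijection onto the surviving slots (your observation that landing on the deleted slot would force $j \equiv 0 \pmod{n+1}$); and the division algorithm gives the unique non-negative $\ell = \lfloor (p+k-1)/(n+1) \rfloor$ for which $p+k \in [[\ell(n+1)+1, (\ell+1)(n+1)]]$, under which $((p+k-1)\bmod(n+1))+1 = p+k-\ell(n+1)$, exactly the stated formula. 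Your modular formulation also handles the wrap-around regime $k > n+1$ uniformly, which is precisely the regime the paper exploits later when it applies the theorem with $\ell \geq 1$ (Cases 2--4 of Proposition \ref{P1}), so your proof is compatible with every downstream use in the paper.
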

\begin{figure}[!h]
\centering
\begin{subfigure}[b]{0.49\textwidth}
\includegraphics[width=1\textwidth]{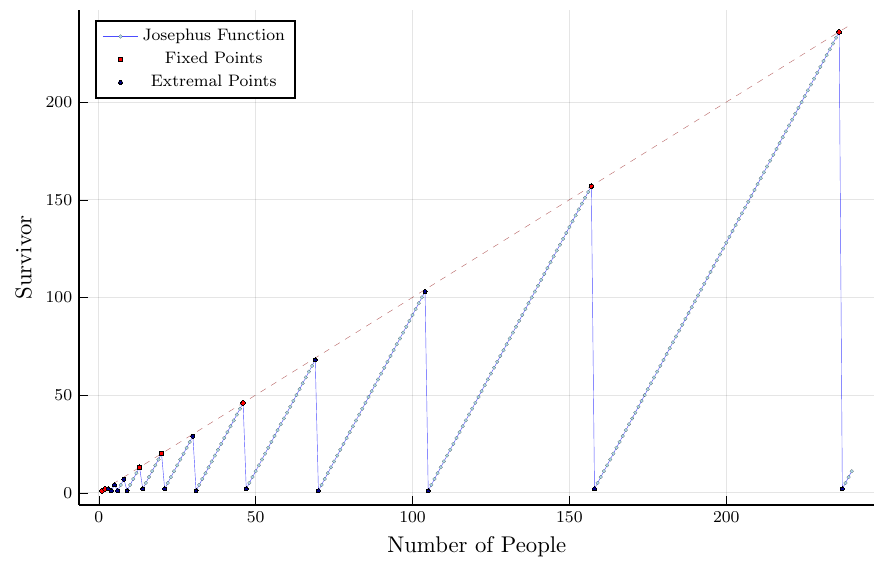}
\caption{Graph of $J_{_3}(n)$ for $n\le 240$}
\label{fig:1.1}
\end{subfigure}
\hfill
\begin{subfigure}[b]{0.49\textwidth}
\includegraphics[width=1\textwidth]{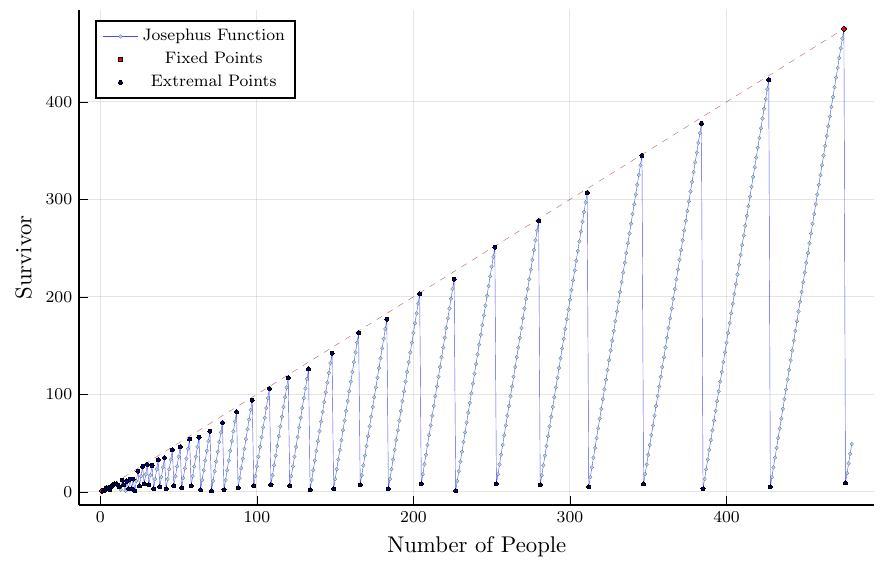}
\caption{Graph of $J_{_{10}}(n)$ for $n\le 480$}
\label{fig:1.2}
\end{subfigure}
\hfill
\caption{Graphs of the Josephus functions $J_{_3}$ and $J_{_{10}}$}
\label{f1}
\end{figure}

\begin{remark}\label{R1} For some special values of the reduction constant $k$, we have the following observations: 
\item [ {\bf (a)}] If $k\le n+1$, then the formula given in Theorem \ref{T1} can be simplified as follows:
\begin{equation}\label{E2}J_{_k}(n+1)=\begin{cases}p+k, & \text{ if \ \ \ } p+k \leq n+1\\
p+k-(n+1),  & \text{ if \ \ \ } p+k > n+1.\end{cases}
\end{equation}
\item [ {\bf (b)}] When $k=3$, \eqref{E2} holds for every $n$. In fact, we just need to check \eqref{E2} for $n=1$. In this case, we have
$J_{_3}(n+1)=J_{_3}(2)=2= J_{_3}(1)+3-(1+1)=J_{_3}(n)+k-(n+1)$.
\item [ {\bf (c)}] When $k=2$, no recursive formula is necessary hereafter since Knuth \cite{Gra} has deduced the following explicit formula:
$J_{_2}(n)=2n-2^{\lfloor \log_2n \rfloor+1}+1,$ for all $n$.  
\item [ {\bf (d)}] It is possible for two consecutive values of $n$ to have the same image under $J_{_k}$. For example, when $k=8$, $J_{_8}(3)=J_{_8}(4)=3$.
\item [ {\bf (e)}] If $n\le k-1$, $J_{_k}(n)\le n\le k-1$, which implies that $n$ is a low, and also high, extremal point. Moreover, if $n\ge 2k-3$, then there is an alternating sequence of pure low and high extremal points for the Josephus function $J_{_k}$ for every $k\ge 2$. 
\end{remark}

In Figure \ref{f1}, we present the graphs of $J_{_3}$ and $J_{_{10}}$ for various values of $n$. The extremal points of $J_{_k}$, $n_{_e}$, \emph{i.e.}, $J_{_k}(n_{_e})\in [[1,k-1]]\cup [[n_{_e}-k+2,n_{_e}]]$, which will be fully studied in the next subsection, are indicated in blue. Moreover, the fixed points $n_{_p}$ of $J_{_k}$, \emph{i.e.}, $n_{_p}=J_{_k}(n_{_p})$, that are also extremal points, are depicted in red. 

Note that for $n_{_e}$ to be both a low and high extremal point, $J_{_{k}}(n_{_e})\in [[1,k-1]]\cap [[n_{_e}-k+2,n_{_e}]]$, which always happens if $n_{_e}\leq k-1$. In addition, if $n_{_e}\geq 2k-3$ is a high extremal point then $n_{_e}+1$ is a low extremal point, which reveals a sequence of alternated extremal points.  The two graphs depicted in Figure \ref{f1} exhibit the intrinsic piecewise linear structure of $J_{_k}$ between its low and high extremal points. Moreover, in general the fixed points for $k\ge 3$ exhibit a chaotic behavior in contrast with the case $k=2$; see Figure \ref{f1} above for the cases $k=3$ and $k=10$, and also Table \ref{t1} below with $k=2, 3,\ldots , 12, 15$. Note that the fixed points of $J_{_2}$ can be described by the formula: $n_{_p}^{(i)}=2^{i}-1$, where $i$ is a natural number. Note further that $J_{_2}(2^i)=1$, which makes $n_{_e}=2^i$ be a low extremal point, for every $i$; see, for instance,  \cite[pp. 184]{Knuth}. 

In Table \ref{t1} below, we illustrate the first fixed points of the Josephus functions $J_{_k}$ for some values of $k$. 
\begin{table}[h!]
\centering
\small
\scalebox{0.8}{
\begin{tabular}{|| c | c | c | c | c | c | c | c | c | c | c | c | c ||}
 \hline
  \diagbox{Fixed points}{Functions} & $J_{_2}$ & $J_{_3}$ & $J_{_4}$ & $J_{_5}$ & $J_{_6}$ & $J_{_7}$ & $J_{_8}$ & $J_{_9}$ & $J_{_{10}}$ & $J_{_{11}}$ & $J_{_{12}}$ & $J_{_{15}}$\\
  \hline 
  1st  & 1 & 1 & 1 & 1 & 1 & 1 & 1 & 1 &  1 & 1 & 1 & 1\\
  \hline 
  2nd  & 3 & 2 & 21 & 2 & 20 & 2 & 3 & 2 & 4 & 2  & 10 & 2\\
  \hline 
  3rd  & 7 & 13 & 38 & 46 & 51 & 3 & 13 & 7 & 475 & 4  & 11 & 52\\ 
  \hline
  4th  & 15 & 20 & 51 & 542 & 794 & 12 & 15 & 8 & 8177 & 5  & 19 & 388\\ 
  \hline
  5th  & 31 & 46 & 122 & 2587 & 953 & 68 & 26 & 15 & 11217 & 49  & 55 & 1899\\ 
   \hline
  6th  & 63 & 157 & 163 & 3234 & 17629 & 274 & 1276 & 17 & 28954 & 54  & 111 & 30003\\
  \hline 
  7th  & 127 & 236 & 689 & 6317 & 21155 & 593 & 1905 & 375 & 126567 & 188  & 290 & 136887\\ 
  \hline
\end{tabular}}
\caption{First seventh fixed points of $J_{_k}$ for $k=2,3,\ldots,12,15$}\label{t1}
\end{table}

Now based on Theorem \ref{T1}, we present a formula for the Josephus function for the value of $n=n_{_p}+1$, \emph{i.e.}, just after a fixed point $n_{_p}=J_{_k}(n_{_p})$ is attained. 

\begin{proposition}[Values of $J_{_k}$ after reaching a fixed point]\label{P1}
 Let $k \geq 2$ and $n_{_p}$ be a fixed point. Then,
\begin{equation*}J_{_k}(n_{_p}+1)=\begin{cases}k-1, & \text{ if \ \ } k \leq n_{_p}+1\\
n_{_p}+1,  & \text{ if \ \ } k = s_{_\ell}+1 \;  \text{ for some  } \ell\geq 1 \\
k_{_1}-1, & \text{ if \ \ } k = s_{_\ell}+k_{_1} \; \text{ for some } \ell\geq 1 \text{ and } 2 \leq k_{_1} \leq n_{_p} \\ 
n_{_p}, & \text{ if \ \ } k = s_{_{\ell+1}} \; \text{ for some } \ell\geq 1,
\end{cases}\end{equation*}  where $s_{_\ell}=\ell(n_{_p}+1)$. Moreover,  $J_{_k}(n_{_p})=n_{_p}$ if and only if $J_{_k}(n_{_p}+1)=k-1$ whenever $n_{_p}\ge k-1$. \end{proposition}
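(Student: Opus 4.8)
The plan is to apply Euler's formula (Theorem~\ref{T1}) at $n=n_{_p}$. Since $n_{_p}$ is a fixed point, $p:=J_{_k}(n_{_p})=n_{_p}$, so the formula becomes $J_{_k}(n_{_p}+1)=n_{_p}+k-\ell(n_{_p}+1)$, where $\ell$ is the unique non-negative integer for which $n_{_p}+k\in[[\ell(n_{_p}+1)+1,(\ell+1)(n_{_p}+1)]]$. Writing $N:=n_{_p}+1$ (so $s_{_\ell}=\ell N$), the whole task reduces to locating the integer $n_{_p}+k$ inside the partition of the positive integers into the consecutive blocks $[[\ell N+1,(\ell+1)N]]$, $\ell\geq 0$, reading off the index $\ell$, and substituting. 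I would organize this by dividing $k$ by $N$: the four cases in the statement are exactly $k\leq N$ (Case 1) and, for $k>N$, the three possibilities $k\equiv 1\pmod N$ (Case 2), $k\not\equiv 0,1\pmod N$ (Case 3), and $k\equiv 0\pmod N$ (Case 4).

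Concretely, first I would dispose of Case 1 ($k\leq n_{_p}+1=N$): here $n_{_p}+k\in[[N+1,2N-1]]\subseteq[[N+1,2N]]$, forcing $\ell=1$ and hence $J_{_k}(n_{_p}+1)=n_{_p}+k-N=k-1$. For $k>N$ I would treat the three residues in turn. If $k=\ell N+1$ with $\ell\geq 1$ (Case 2), then $n_{_p}+k=(\ell+1)N$ is precisely the right endpoint of the block indexed by $\ell$, so $J_{_k}(n_{_p}+1)=(\ell+1)N-\ell N=N=n_{_p}+1$. If $k=\ell N+k_{_1}$ with $2\leq k_{_1}\leq n_{_p}$ (Case 3), then $n_{_p}+k=(\ell+1)N+(k_{_1}-1)$ with $1\leq k_{_1}-1\leq n_{_p}-1<N$, so the correct index is $\ell+1$ and $J_{_k}(n_{_p}+1)=k_{_1}-1$. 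Finally, if $k=(\ell+1)N$ with $\ell\geq 1$ (Case 4), then $n_{_p}+k=(\ell+2)N-1$, one less than the right endpoint $(\ell+2)N$ of the block indexed by $\ell+1$, so $J_{_k}(n_{_p}+1)=(\ell+2)N-1-(\ell+1)N=N-1=n_{_p}$.

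For the converse I would argue directly from Euler's formula without assuming that $n_{_p}$ is fixed: suppose $J_{_k}(n_{_p}+1)=k-1$. Then $k-1=p+k-\ell(n_{_p}+1)$ for the relevant index $\ell$, whence $p=\ell(n_{_p}+1)-1$. The essential constraint is that $p=J_{_k}(n_{_p})\in[[1,n_{_p}]]$; combined with $n_{_p}+1\geq 2$ this forces $\ell=1$ and therefore $p=n_{_p}$, i.e.\ $n_{_p}$ is a fixed point.

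The case analysis itself is routine; the only genuine care needed --- and the step where I would expect endpoint errors to creep in --- is verifying that $n_{_p}+k$ really lands in the claimed block, especially in Cases 2 and 4, where $n_{_p}+k$ sits at (or one short of) a block endpoint and where confusing the index $\ell$ with $\ell+1$ would shift the answer by the full period $N$. I would also double-check that the four cases are exhaustive and pairwise disjoint via the division algorithm, noting in particular that $k=N$ belongs to Case 1 and not to Case 4, since the latter requires $\ell\geq 1$, i.e.\ $k\geq 2N$.
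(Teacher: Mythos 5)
Your proposal is correct and follows essentially the same route as the paper's proof: both apply Euler's formula (Theorem~\ref{T1}) with $p=J_{_k}(n_{_p})=n_{_p}$, locate $n_{_p}+k$ in the blocks $[[\,\ell(n_{_p}+1)+1,(\ell+1)(n_{_p}+1)\,]]$ via the same four-case division of $k$ by $n_{_p}+1$, and prove the converse by solving $k-1=p+k-s_{_\ell}$ for $p=s_{_\ell}-1$ and ruling out $\ell=0$ and $\ell\geq 2$ using $p\in[[1,n_{_p}]]$. Your only (harmless) deviation is that your converse does not assume $k\leq n_{_p}+1$ as the paper's does, which if anything is slightly cleaner, and your explicit check of disjointness/exhaustiveness of the four cases is a sound addition.
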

\begin{proof}
Let us consider the four cases for the values of $k$ with respect to $n_{_p}$ as follows:

\noindent {Case 1.} Assume $k \leq  n_{_p} + 1$. Note that $s_{_1} + 1  = n_{_p} + 2  \leq n_{_p} + k  \leq  2n_{_p} + 1  <  s_{_2}$. It follows from Theorem \ref{T1} that $J_{_k}(n_{_p} + 1)  =  n_{_p} + k - s_{_1}  =  n_{_p} + k - (n_{_p} + 1)  =  k - 1$.
 
\noindent {Case 2.} Assume  $k =  s_{_\ell} + 1$. So, $n_{_p} + k  = s_{_{\ell+1}}$.
Then, by Theorem \ref{T1}, $J_{_k}(n_{_p} + 1)  = n_{_p} + k - s_{_\ell}  = n_{_p} + 1$.

\noindent {Case 3.} Assume  $k =  s_{_\ell} + k_{_1}$ ($2 \leq k_{_1} \leq n_{_p}$). Note that $s_{_{\ell+1}} + 1 \leq  n_{_p} + k \leq 2n_{_p} + s_{_\ell} =  s_{_{\ell+1}} + n_{_p} - 1  <  s_{_{\ell+2}}$. Then, by Theorem \ref{T1}, $J_{_k}(n_{_p} + 1)  = n_{_p} + k - s_{_{\ell+1}} = k_{_1} - 1$.

\noindent {Case 4.} Assume  $k = s_{_{\ell+1}}$. Then, $s_{_{\ell+1}} + 1 \leq n_{_p} + k < s_{_{\ell+2}}$. Theorem \ref{T1} implies $J_{_k}(n_{_p} + 1) = n_{_p} + k - s_{_{\ell+1}} = n_{_p}$.

On the other hand, assume that $k \leq n_{_p} + 1$ and $J_{_k}(n_{_p} + 1) = k - 1$. By using Euler's formula backwards, we can determine the value of $p = J_{_k}(n_{_p})$. In particular, $p$ must satisfy the equation $k - 1 = p + k - s_{_\ell}$ for some non-negative integer $\ell$. This is equivalent to $p = s_{_\ell} - 1$. If $\ell = 0$, then $p = -1$, which is not possible. If $\ell \geq 2$, then $p \geq s_{_2} - 1 > s_{_1} - 1 = n_{_p}$, which is also not possible. Hence, $\ell = 1$ and $p = n_{_p}$, which implies that $n_{_p}$ is a fixed point of $J_{_k}$.
\end{proof}

\begin{remark}\label{remark-2}
If $n_{_p}$ is a fixed point, then the following interesting facts can be derived from Proposition \ref{P1}:
\item [ {\bf (a)}] If $n_{_p} \geq k-1$, then $J_{_k}(n_{_p}+1)=k-1$. In particular, $J_{_3}(n_{_p}+1)=2$ for any $n_{_p}$.
\item [ {\bf (b)}] It is possible for two consecutive fixed points to exist for a given value of $k$. For example, when $k=9$, $7$ and $8$ are both fixed points of $J_{_9}$.

\item [ {\bf (c)}] Note that, if a fixed point $n_{_p}\le k-1$, then $n_{_p}$ is also a low extremal point. 
\end{remark}
    
We will now provide a formal presentation of a partially generalized version of Euler's formula, which was previously stated without proof in \cite[pp. 47]{Rob}.

\begin{lemma}[Generalization of the Euler's formula]\label{L1}
Let $n \geq 1$ and $k \geq 2$ be positive integers and denote $p := J_{_k}(n)$. Then, 

\item [ {\bf (a)}] If $p+km \leq n+m$, for any $m\in\mathbb{N}$, then $J_{_k}(n+m)=p+km$.

\item[ {\bf (b)}] If $m\in\mathbb{N}$ is the smallest value for which $p+km > n+m$, then $J_{_k}(n+m)=p+km-(n+m).$
\end{lemma}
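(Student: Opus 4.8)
The plan is to prove both parts by induction on $m$, using the Euler formula (Theorem \ref{T1}) as the single-step engine and bootstrapping the multi-step statement from it. Part (a) will act as the workhorse; part (b) will then follow by applying part (a) up to index $m-1$ and taking one further Euler step. Throughout, the key is to identify which admissible index $\ell$ arises in Theorem \ref{T1} at each step.

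For part (a), I would first record the elementary monotonicity observation that, because $k\geq 2$, the quantity $p+kj-(n+j)=(p-n)+j(k-1)$ is nondecreasing in $j$; hence if $p+km\leq n+m$ then $p+kj\leq n+j$ for every $0\leq j\leq m$. I would then induct on $j$, the base case $j=0$ being $J_{_k}(n)=p$. Assuming $J_{_k}(n+j)=p+kj$ with $p+k(j+1)\leq n+j+1$, apply Theorem \ref{T1} at $n+j$ with survivor value $p+kj$. Since $1\leq p+k(j+1)\leq n+j+1$, the value $p+k(j+1)$ already lies in $[[1,n+j+1]]$, so the admissible index is $\ell=0$ and $J_{_k}(n+j+1)=p+k(j+1)$. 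This closes the induction and yields (a).

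For part (b), the minimality of $m$ gives $p+k(m-1)\leq n+(m-1)$ (note $m\geq 1$, since $p\leq n$ rules out $m=0$), so part (a) applies at index $m-1$ and gives $J_{_k}(n+m-1)=p+k(m-1)$. Applying Theorem \ref{T1} once more, with survivor value $p+k(m-1)$ at $n+m-1$, produces $J_{_k}(n+m)=p+km-\ell(n+m)$ for the unique $\ell$ with $p+km\in[[\ell(n+m)+1,(\ell+1)(n+m)]]$. The hypothesis $p+km>n+m$ forces $\ell\geq 1$, and the desired identity is precisely the assertion that $\ell=1$.

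Pinning down $\ell=1$ is the main obstacle: it is equivalent to the upper bound $p+km\leq 2(n+m)$, i.e.\ to the claim that exactly one wraparound of the circle occurs at this step. Here I would exploit the companion inequality $p+k(m-1)\leq n+m-1$ supplied by minimality: adding $k$ gives $p+km\leq (n+m-1)+k$, so a single wraparound is guaranteed whenever $k\leq n+m+1$. I expect this size control of $k$ relative to $n+m$ to be the delicate point, and I would either adopt it as the operative regime (consistent with the simplified setting $k\leq n+1$ of Remark \ref{R1}) or, for larger jumps, carry the exact index $\ell=\lceil (p+km)/(n+m)\rceil-1$ explicitly. Once the single-wraparound case is secured, $J_{_k}(n+m)=p+km-(n+m)$ follows at once, completing (b).
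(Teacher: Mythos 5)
Your part (a) is correct and essentially identical to the paper's argument: both reduce the hypothesis to the monotonicity of $(p-n)+j(k-1)$ and iterate Theorem \ref{T1} with index $\ell=0$. The genuine gap is in part (b), and it sits exactly where you flagged it: you never prove the single-wraparound bound $p+km\leq 2(n+m)$, proposing instead either to assume $k\leq n+m+1$ (which establishes a weaker statement than the lemma) or to ``carry the exact index $\ell$'' (which abandons the stated conclusion altogether). The paper closes this step with a short contradiction that your proposal is missing: suppose $p+km>2(n+m)$; since $p+k(m-1)\leq n+m-1$ by minimality, adding $k$ gives $2(n+m)<p+km\leq (n+m)+k-1$, hence $n+m<k-1$; but then $p+k(m-1)\leq n+m-1<k-2$, while $p+k(m-1)\geq p+k\geq k+1$ as soon as $m\geq 2$ --- a contradiction. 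So the needed bound is forced by the minimality hypothesis itself once $m\geq 2$, with no size restriction on $k$; this deduction is the one idea your write-up lacks, and without it part (b) is unproved. (With the bound in hand, the paper finishes via \eqref{E2}, just as you intended.)

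That said, your wariness about the regime $k>n+m+1$ is diagnostically accurate rather than excessive caution. The paper's chain ``$k<p+k(m-1)$'' tacitly requires $m\geq 2$, and for $m=1$ with $k$ large the bound --- and the lemma's conclusion --- can genuinely fail: take $n=1$ and $k=5$, so $p=J_{_5}(1)=1$ and $m=1$ is minimal, yet the formula of part (b) predicts $J_{_5}(2)=4$, whereas $J_{_5}(2)=2$ (Theorem \ref{T1} applies here with $\ell=2$, not $\ell=1$). In the setting where the paper actually invokes the lemma (points $n\geq 2k-3$, so that $k\leq n+1$), your conditional bound $k\leq n+m+1$ holds automatically and your argument would go through; but as a proof of the lemma as stated, part (b) needs the contradiction argument above for $m\geq 2$, together with an explicit hypothesis such as $p+k\leq 2(n+1)$ (for instance $k\leq n+2$) to cover the case $m=1$.
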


\begin{proof} \noindent {\bf (a)} Let $m$ be any natural number. First of all, $p+km \leq n+m$, if and only if, $m\leq (n-p)/(k-1)$. Thus, $p+k\check{m} \leq n+\check{m}$ for every $0\leq \check{m} \leq m$. Let us define the finite sequences: $r_{_{\check{m}}}:=n+\check{m}$, and $t_{_{\check{m}}}:= J_{_k}(r_{_{\check{m}}})$ for every integer $\check{m}=0,\ldots,m$. Since, $p+k\leq n+1$, by Theorem \ref{T1}, we get $t_{_1}=p+k$. Now, since $t_{_1}+k=p+k(2) \leq n+2=r_{_1}+1$, again by Theorem \ref{T1}, we get $t_{_2}=t_{_1}+k$. By applying this procedure $m$ times, we get that $t_{_{\check{m}}}=t_{_{\check{m}-1}}+k$ for $\check{m}\in[[1,m]]$. Then,
$$J_{_k}(n+m)= J_{_k}(r_{_m})=t_{_m}=t_{_{m-1}}+k=\cdots=p+km,$$ proving item (a). 
   
\noindent {\bf (b)} Let us denote $p':=p+k(m-1)$ and $n':=n+(m-1)$. Note that $p'\leq n'$ by hypothesis. Thus, by part (a), $J_{_k}(n')=p'$. Moreover $p'+k>n'+1$. We claim that $p'+k\leq 2(n'+1)$. Otherwise, $2(n+m)=2(n'+1) < p'+k \leq n'+k =(n+m)+k-1$, which implies that $n+m<k-1$. Then, $k<p+k(m-1)\leq (n+m)-1<k-2$, which is a contradiction. Hence, it follows from equation (\ref{E2}) that $J_{_k}(n+m)=J_{_k}(n'+1)=p'+k-(n'+1)=p+k-(n+m),$ which proves item (b). 
\end{proof}

In the following subsection, we will use Lemma \ref{L1} to derive several general conclusions regarding the behavior of the Josephus function $J_{_k}$. 
\subsection{Characterization of extremal points}\label{char-extremal}

We start by characterizing some initial extremal points for the Josephus function.

\begin{corollary}[The first known high extremal point]\label{coro2.4}
    The point $2k-2$ is always an extremal point for $J_{_k}$. In particular, \begin{equation}\label{def-ne}
    n_{_e}:= \begin{cases} 2k-3, & \text{ if } J_{_k}(2k-2) \leq k-1  \\
    2k-2, & \text{ if } J_{_k}(2k-2) > k-1,\end{cases}\end{equation} is a high extremal point and $n_{_e}+1$ is a low extremal point.
\end{corollary}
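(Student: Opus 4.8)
The plan is to dispatch the headline claim almost for free, then do the real work on the ``in particular'' statement via two short applications of the simplified Euler recursion. For the first part, I would observe that at $n=2k-2$ the low range $[[1,k-1]]$ and the high range $[[(2k-2)-k+2,\,2k-2]]=[[k,2k-2]]$ are consecutive and together exhaust $[[1,2k-2]]$. Since $J_{_k}(2k-2)\in[[1,2k-2]]$ always, the point $2k-2$ is automatically extremal, with no computation needed. The substance lies entirely in showing that $n_{_e}+1$ is a \emph{low} extremal point in each of the two cases defining $n_{_e}$.

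The engine for both cases is the simplified recursion \eqref{E2} (equivalently Lemma \ref{L1}), which is legitimate here because $2k-3\ge k-1$ and $2k-2\ge k-1$ for every $k\ge 2$, so the hypothesis $k\le n+1$ holds at $n=2k-3$ and at $n=2k-2$. I would set $a:=J_{_k}(2k-3)$ and compute $J_{_k}(2k-2)$ from it. Since $a+k\le 2k-2$ exactly when $a\le k-2$, the recursion yields
\begin{equation*}
J_{_k}(2k-2)=\begin{cases} a+k, & a\le k-2,\\ a-k+2, & a\ge k-1.\end{cases}
\end{equation*}
The two branches land in the \emph{disjoint and adjacent} ranges $[[k+1,2k-2]]$ and $[[1,k-1]]$ respectively (using $1\le a\le 2k-3$), so the position of $J_{_k}(2k-2)$ relative to the threshold $k-1$ is completely controlled by whether $2k-3$ is itself a high point ($a\ge k-1$) or a low point ($a\le k-2$).

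From here the two cases close quickly. If $J_{_k}(2k-2)\le k-1$, then the disjointness just noted forces $a=J_{_k}(2k-3)\ge k-1$, i.e. $2k-3$ is a high extremal point, while $2k-2=n_{_e}+1$ is low by the case hypothesis; this settles $n_{_e}=2k-3$. If instead $J_{_k}(2k-2)>k-1$, then $a\le k-2$ and $J_{_k}(2k-2)=a+k\in[[k+1,2k-2]]$, so $n_{_e}=2k-2$ is high; applying \eqref{E2} once more at $n=2k-2$ (again valid since $2k-2\ge k-1$) gives $J_{_k}(2k-1)=(a+k)+k-(2k-1)=a+1\le k-1$, so $n_{_e}+1=2k-1$ is low.

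The only genuine care is bookkeeping rather than a true obstacle: one must confirm the $k\le n+1$ hypothesis at each step (it holds throughout for $k\ge 2$), verify the boundary arithmetic that makes the two output ranges disjoint and adjacent, and, in the first case, invoke that disjointness to recover $J_{_k}(2k-3)\ge k-1$ from $J_{_k}(2k-2)\le k-1$ (a mild ``backwards'' use of the recursion). A final sanity check on the small values $k=2,3$, where $2k-3$ may coincide with a point $\le k-1$, confirms the edge behavior, and the outcome aligns with the alternating low/high pattern already noted in Remark \ref{R1}(e).
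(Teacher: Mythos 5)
Your proof is correct, and it follows the same two-case skeleton as the paper's, but with one genuinely different---and in fact more complete---step. The paper argues directly from $j := J_{_k}(2k-2)$: when $j > k-1$ it notes $j \in [[k,2k-2]]$, so $2k-2$ is a high extremal point by definition, and then takes one forward Euler step via Lemma \ref{L1}(b) to get $J_{_k}(2k-1)=j-k+1\in[[1,k-1]]$; this is exactly your second case, modulo your substitution $j=a+k$ and your use of \eqref{E2} in place of Lemma \ref{L1}(b). When $j \leq k-1$, however, the paper simply asserts ``by the definition of low extremal point and \eqref{def-ne}'' that $2k-3$ is a high extremal point---an assertion that is not immediate from the definition, since one must rule out $J_{_k}(2k-3)\leq k-2$. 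Your backward inference supplies precisely the missing argument: routing through $a := J_{_k}(2k-3)$, the two branches $a+k\in[[k+1,2k-2]]$ (when $a\leq k-2$) and $a-k+2\in[[1,k-1]]$ (when $a\geq k-1$) are disjoint, so $j\leq k-1$ forces $a\geq k-1$, which places $J_{_k}(2k-3)$ in the high range $[[k-1,2k-3]]$ at $2k-3$. This mirrors the ``Euler formula backwards'' device the paper itself uses in the converse part of Proposition \ref{P1}, so it is squarely within the paper's toolkit; your partition observation at $2k-2$ (the low range $[[1,k-1]]$ and the high range $[[k,2k-2]]$ together exhaust $[[1,2k-2]]$) also yields the headline claim for free, which the paper's proof never addresses separately. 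Two cosmetic points: the two branch ranges are disjoint but not adjacent---the value $k$ is skipped, so your computation incidentally shows $J_{_k}(2k-2)\neq k$---and only disjointness is actually used; and your verifications of the hypothesis $k\leq n+1$ at $n=2k-3$ and $n=2k-2$ are indeed all that \eqref{E2} requires, so the bookkeeping is sound, including the edge case $k=2$ where the branch $a\leq k-2$ is vacuous.
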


\begin{proof}
When $J_{_k}(2k-2) \leq k-1$, by the definition of low extremal point, we have that $2k-2$ is a low extremal point and hence $n_{_e}=2k-3$ is a high extremal point proving the first part of equation \eqref{def-ne}.

On the other hand, when $J_{_k}(2k-2)>k-1$ we have
$$J_{_k}(2k-2)\geq k =(2k-2) -k+2 \in [[(2k-2) -k+2,2k-2]].$$ Hence, $n_{_e}=2k-2$ is a high extremal point, as desired. The rest of the statement of the corollary follows from the definition of low extremal points. 
\end{proof}

In the following, we present our main results. From now on, let us assume that $k\ge 2$ and $n\ge k-1$, which allows an alternating sequence of low and high extremal points for the Josephus function $J_{_k}$ where $k\ge 2$. We next reveal a recurrence procedure to find successive extremal points of the function $J_{_k}$. In particular, we prove that there is an infinite number of fixed points for $J_{_k}$.  
 
First, we introduce the following sequences: 
\begin{align}\label{delta}\{\delta_{_k}(r,j)\}_{r=0, j=1}^{k-2,k-1}\quad \mbox{with}\quad \quad\delta_{_k}(r,j)&:=\begin{cases}  1,  &\text{ if } r < j-1 \\                                                                0, & \text{ if } r \geq j-1, \\
                             \end{cases}\\ \label{ak}\{a_{_{k}}(n,r)\}_{n=k-1,r=0}^{\infty, k-2}\quad \mbox{with}\quad\quad a_{_{k}}(n,r)&:=\dfrac{k(n+1)-(r+1)}{k-1}, \\  \label{mk} \{M_{_{k}}(n,r,j)\}_{n=k-1, r=0, j=1}^{\infty, k-2, k-1}\quad \mbox{with}\quad M_{_{k}}(n,r,j)&:=\dfrac{n-r}{k-1}-\delta_{_k}(r,j). 
\end{align}
The meaning of each sequence term can be summarized as follows: $\delta_{_k}(r,j)$ serves as a variation of Kronecker's delta, while $a_{_k}(n,r)$ is associated with the extremal points of the Josephus function. Lastly, $M_{_k}(n,r,j)$ defines the upper bound of an interval where the Josephus function exhibits linear behavior. We outline some important properties of these sequences in the following lemma.

\begin{lemma}[Properties of the Josephus function]\label{L 2} Denote $j:=J_{_k}(n+1)$. Then,

\item [ {\bf (a)}] If $j\in [[1,k-1]]$, i.e., $n+1$ is a low extremal point, then $m_{_0} := \left \lfloor \dfrac{n-(j-1)}{k-1} \right \rfloor$ is the largest nonnegative integer $m$ that satisfies the inequality
\begin{equation}\label{Eq A}
j+km \leq n+1+m.
\end{equation}
and $[[0, m_{_0}]]$ is the solution set of \eqref{Eq A}. Moreover, $m_0\ge 1$ if $n\ge k-2+j$.
\item [ {\bf (b)}] If $j\in [[1,k-1]]$, i.e., $n+1$ is a low extremal point, and $n\equiv r \mod (k-1)$, then $m_{_0}= M_{_k}(n,r,j)$ and $n+1+m_{_0}=a_{_k}(n,r)-\delta_{_k}(r,j)$.
\item [ {\bf (c)}] $a_{_k}(n,r)$ is an integer, if and only if, $n \equiv r \mod (k-1)$.
\end{lemma}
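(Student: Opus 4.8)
The plan is to prove each of the three parts of Lemma~\ref{L 2} by reducing them to elementary integer arithmetic, exploiting the definitions in \eqref{delta}--\eqref{mk} together with Lemma~\ref{L1}. For part~\textbf{(a)}, I would start by writing $j := J_{_k}(n+1)$ and rearranging the defining inequality \eqref{Eq A}, namely $j + km \leq n+1+m$, into the form $(k-1)m \leq n+1-j$. Since $k \geq 2$ we have $k-1 \geq 1 > 0$, so dividing preserves the inequality direction and yields $m \leq \frac{n+1-j}{k-1} = \frac{n-(j-1)}{k-1}$. The largest nonnegative integer satisfying this is precisely $m_{_0} = \left\lfloor \frac{n-(j-1)}{k-1} \right\rfloor$, and because the constraint is a single upper bound on $m$, the full solution set in the nonnegative integers is the interval $[[0, m_{_0}]]$. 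For the final claim, I would observe that $m_{_0} \geq 1$ holds exactly when $\frac{n-(j-1)}{k-1} \geq 1$, i.e.\ when $n - (j-1) \geq k-1$, which rearranges to $n \geq k-2+j$, matching the stated hypothesis.

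For part~\textbf{(c)}, I would handle it next since part~\textbf{(b)} depends conceptually on knowing when $a_{_k}(n,r)$ is an integer. Writing $a_{_k}(n,r) = \frac{k(n+1)-(r+1)}{k-1}$, I would split the numerator as $k(n+1) - (r+1) = (k-1)(n+1) + (n+1) - (r+1) = (k-1)(n+1) + (n-r)$. Hence $a_{_k}(n,r) = (n+1) + \frac{n-r}{k-1}$, so $a_{_k}(n,r)$ is an integer if and only if $(k-1) \mid (n-r)$, which is exactly the condition $n \equiv r \bmod (k-1)$.

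For part~\textbf{(b)}, assuming $j \in [[1,k-1]]$ and $n \equiv r \bmod (k-1)$ with $r \in [[0,k-2]]$, I would compute $m_{_0}$ explicitly. Since $n \equiv r$, write $n = r + (k-1)q$ for some integer $q$, so $\frac{n-(j-1)}{k-1} = \frac{(r - (j-1))}{k-1} + q$. The floor then depends on the sign and size of $r-(j-1)$: when $r \geq j-1$ the quantity $\frac{r-(j-1)}{k-1} \in [0,1)$ (using $0 \le r-(j-1) \le k-2 < k-1$), so the floor is $q = \frac{n-r}{k-1}$; when $r < j-1$ the quantity lies in $(-1,0)$, so the floor is $q-1 = \frac{n-r}{k-1} - 1$. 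These two cases are exactly encoded by $\delta_{_k}(r,j)$, giving $m_{_0} = \frac{n-r}{k-1} - \delta_{_k}(r,j) = M_{_k}(n,r,j)$. For the second identity, I would substitute into $n+1+m_{_0} = n+1 + \frac{n-r}{k-1} - \delta_{_k}(r,j)$, recognize $n+1+\frac{n-r}{k-1} = a_{_k}(n,r)$ from the computation in part~\textbf{(c)}, and conclude $n+1+m_{_0} = a_{_k}(n,r) - \delta_{_k}(r,j)$.

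The main obstacle I anticipate is the careful case analysis for the floor function in part~\textbf{(b)}: one must verify the precise bounds on $r - (j-1)$ using $0 \le r \le k-2$ and $1 \le j \le k-1$ to confirm that $\frac{r-(j-1)}{k-1}$ lands in $[0,1)$ or $(-1,0)$ as claimed, and that the boundary case $r = j-1$ is correctly assigned (where $\delta_{_k}(r,j)=0$). Everything else is routine algebraic rearrangement, so the bookkeeping of which inequality is strict versus non-strict at the $r = j-1$ threshold is the only delicate point.
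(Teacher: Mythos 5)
Your proof is correct, and in outline it follows the same route as the paper: reduce \eqref{Eq A} to $m \le \frac{n-(j-1)}{k-1}$, do a case analysis on the sign of $r-(j-1)$ with the fractional part landing in $[0,1)$ or $(-1,0)$, and match the cases to $\delta_{_k}(r,j)$. Two local steps are genuinely different and, in both cases, cleaner. First, for the ``moreover'' claim in (a), the paper splits into the ranges $n \ge 2k-3$ and $n \in [[k-1,2k-4]]$, handling the latter by direct inspection of $\frac{n-(j-1)}{k-1}$; you instead use the single equivalence $\lfloor x \rfloor \ge 1 \iff x \ge 1$, so $m_{_0}\ge 1 \iff n-(j-1)\ge k-1 \iff n \ge k-2+j$, which proves the claim (indeed as an equivalence) in one line with no case split. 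Second, for (c), the paper argues via a chain of congruences modulo $k-1$ in both directions ($k(n+1)\equiv r+1$, then $n\equiv kn\equiv k(n+1)-k\equiv r$, and conversely); you instead use the algebraic identity $a_{_k}(n,r)=(n+1)+\frac{n-r}{k-1}$, which makes the integrality condition $(k-1)\mid(n-r)$ immediate and simultaneously supplies the identity $n+1+\frac{n-r}{k-1}=a_{_k}(n,r)$ that you reuse to finish (b) --- a small structural economy the paper's proof does not exploit, since it derives the second identity in (b) separately from the definitions \eqref{delta} and \eqref{ak}. One minor point worth making explicit in your write-up of (a): $m_{_0}\ge 0$ (so that ``largest nonnegative integer'' makes sense) because $j=J_{_k}(n+1)\le n+1$ forces $n-(j-1)\ge 0$, with $m=0$ always a solution; the paper covers this by invoking the standing assumption $n\ge k-1$ and noting $0$ is a trivial solution.
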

\begin{proof} \noindent {\bf (a)}  First note that \eqref{Eq A} is equivalent to $m\le (n-(j-1))/(k-1)$. Then, since $n\ge k-1$, we have $m_0=\left\lfloor(n-(j-1))/(k-1)\right\rfloor$ as the largest non-negative integer satisfying \eqref{Eq A}. Therefore, any non-negative integer $m$ satisfying $0\le m\le m_0$ clearly satisfies the inequality \eqref{Eq A}, since $0$ is a trivial solution. Now, note that $n\ge k-2+j$ happens automatically when $n\ge 2k-3$ since $j\le k-1$. Moreover, since $(n-(j-1))/(k-1)\ge (2k-3 - (k-2))/(k-1)=1$, we have $m_0\ge 1$.

On the other hand, if $n\in [[k-1, 2k-4]]$ then, from a direct inspection to the value of $(n-(j-1))/(k-1)$, we get that
\begin{equation}\label{m0-n-small}m_{_0}=\begin{cases} 0, & \text{ if  } n< k-2+j \\
1,  & \text{ if  } n\ge k-2+j. \end{cases}\end{equation} So, in this case, it follows from \eqref{m0-n-small} that $m_0=1$ is the largest integer satisfying \eqref{Eq A} if $n\ge k-2+j$, proving this item. 

\noindent {\bf (b)} Suppose that $n \equiv r \mod (k-1)$. Note that $\frac{(j-1)-r}{k-1} \in [\frac{1}{k-1}, \frac{j-1}{k-1}] \subset (0,1)$ when $r<j-1$, and $\frac{r-(j-1)}{k-1} \in [\frac{1}{k-1}, 1-\frac{j}{k-1}] \subset (0,1)$ when $r>j-1$. Thus, using the definition of $M_{_k}(n,r,j)$ given in \eqref{mk}, we get 
\begin{align*} m_{_0}  &= \left.\begin{cases} \ \left \lfloor \dfrac{n-r}{k-1} -\dfrac{(j-1)-r}{k-1} \right \rfloor = \dfrac{n-r}{k-1} -1,& \text{ if } r < j-1 \\ \ \dfrac{n-r}{k-1},     & \text{ if } r = j-1 \\
\ \left \lfloor \dfrac{n-r}{k-1} +\dfrac{r-(j-1)}{k-1}\right \rfloor = \dfrac{n-r}{k-1}, & \text{ if } r > j-1
\end{cases}\right \}= M_{_k}(n,r,j).
\end{align*}
The equality $n+1+m_{_0}=a_{_k}(n,r)-\delta_{_k}(r,j)$ follows directly from previous equation and by the definition of $a_{_k}(n,r)$ and $\delta_{_k}(r,j)$ in \eqref{delta} and \eqref{ak}, respectively.

\noindent {\bf (c)} If $a_{_k}(n,r)=\dfrac{k(n+1)-(r+1)}{k-1}$ is an integer, clearly $k(n+1)\equiv r+1 \mod(k-1)$. 
In this case,
\begin{equation*}
n \equiv kn-(k-1)n \equiv kn \equiv k(n+1)-k \equiv r+1-k \equiv r-(k-1) \equiv r \mod(k-1).
\end{equation*}
Now if $n \equiv r \mod (k-1)$ then $k(n+1) \equiv n+1 + (k-1)(n+1) \equiv n+1 \equiv r+1 \mod (k-1)
$, which implies that $a_{_k}(n,r)$ is an integer.
\end{proof}

We now present formulas characterizing extremal points of the Josephus function $J_{_k}$ and their images. These expressions will be useful in determining recurrence formulas between extremal points and a formula for the image of $J_{_k}$ at any arbitrary $n$ in terms of the high and low extremal points.

\begin{theorem}[Extremal point formulas]\label{L 2-part2} Let $n_{_e}\geq 2k-3$ be a high extremal point and denote $j:=J_{_k}(n_{_e}+1)$. Then, 

\item [ {\bf (a)}] $J_{_k}(n_{_e}+1+m)=j+km$, for all $m\in \left[\left[0, \left \lfloor (n_{_e}-(j-1))/(k-1) \right \rfloor\right]\right]$.

\item [ {\bf (b)}] If $n_{_e} \equiv r \mod{(k-1)}$, then the next high extremal point of $J_{_k}$ is \begin{equation}\label{ne+hep}n_{_e}^{_{\rm +}}:=\dfrac{k(n_{_e}+1)-(r+1)}{k-1}-\delta_{_k}(r,j).\end{equation} Moreover, 
\begin{equation}\label{jk-ne+}J_{_k}(n_{_e}^{_{\rm +}})=n_{_e}^{_{\rm +}}-\delta_{_k}(r,j)(k-1)-r+(j-1),\end{equation}
\begin{equation}\label{jk-ne++1}J_{_k}(n_{_e}^{_{\rm +}}+1)=(1-\delta_{_k}(r,j))(k-1)-r+(j-1),\end{equation}
and 
\begin{equation}\label{jk-ne++1-new}J_{_k}(n_{_e}^{_{\rm +}}+1)=(k-1)-[n_{_e}^{_{\rm +}}-J_{_k}(n_{_e}^{_{\rm +}})].\end{equation}
\item [ {\bf (c)}] $n_{_e}^{_{\rm +}}$ is a fixed point of $J_{_k}$, if and only if, $r-(j-1)=0$ and $n_{_e}^{_{\rm +}}=\dfrac{k(n_{_e}+1)-j}{k-1}$.
\item [ {\bf (d)}] $J_{_k}$ is a linear function on $[[n_{_e}+1,n_{_e}^{_{\rm +}}]]$.
\end{theorem}
\begin{proof} 

\noindent {\bf (a)} By Lemma \ref{L 2}(a), $j+km \leq n_{_e}+1+m$ for every $m\in [[0,m_{_0}]]$ where $m_{_0}:= \left \lfloor (n_{_e}-(j-1))/(k-1) \right \rfloor$. Additionally, Lemma \ref{L1}(a) implies that $J_{_k}(n_{_e}+1+m)=j+km$ for every $m\in [[0,\left \lfloor (n_{_e}-(j-1))/(k-1) \right \rfloor]]$ as desired.

\noindent {\bf (b)}  Equation \eqref{ne+hep} follows directly from the previous item, and Lemma \ref{L 2}(b) that $m_0\ge 1$ and 
\begin{align*}
J_{_k}(n_{_e}^{_{\rm +}})&=J_{_k}(n_{_e}+1+m_{_0})=j+km_{_0}\\
&=\begin{cases} j+k\left(\dfrac{n_{_e}-r}{k-1}-1\right), & \text{ if } r < j-1 \\
j+k\left(\dfrac{n_{_e}-r}{k-1}\right), & \text{ if } r \geq j-1
\end{cases} \\
&=\begin{cases}a_{_k}(n_{_e},r)-1-(k-1)-(r-(j-1)), & \text{ if } r < j-1 \\
a_{_k}(n_{_e},r)-(r-(j-1)), & \text{ if } r \geq j-1\end{cases}\\
&= n_{_e}^{_{\rm +}}-\delta_{_k}(r,j)(k-1)-r+j-1,
\end{align*}
where $m_{_0}= \left \lfloor (n_{_e}-(j-1))/(k-1) \right \rfloor$, which proves \eqref{jk-ne+}.
Now using Lemma \ref{L1}(b) and Lemma \ref{L 2}(b), we have
\begin{align*}
J_{_k}(n_{_e}^{_{\rm +}}+1)&=J_{_k}(n_{_e}+1+(m_{_0}+1))=j+k(m_{_0}+1)-(n_{_e}+1+(m_{_0}+1))\\
    &=\begin{cases} j+k\left(\dfrac{n_{_e}-r}{k-1}\right)-n_{_e}-1-\dfrac{n_{_e}-r}{k-1}, & \text{ if } r < j-1 \\
         j+k\left(\dfrac{n_{_e}-r}{k-1}+1\right)-n_{_e}-2-\dfrac{n_{_e}-r}{k-1}, & \text{ if } r \geq j-1
         \end{cases} \\
         &=\begin{cases}-(r-(j-1)), & \text{ if } r < j-1 \\
         (k-1)-(r-(j-1)), & \text{ if } r \geq j-1\end{cases}\\
         &= (1-\delta_{_k}(r,j))(k-1)-r+j-1,
\end{align*} establishing \eqref{jk-ne++1}. On the other hand,
since $(1-\delta_{_k}(r,j))(k-1)-r+j-1 \in [[1,k-1]]$, $n_{_e}^{_{\rm +}}$ is certainly the next high extremal point of $J_{_k}$. Additionally,
\begin{align*}
(k-1)-[n_{_e}^{_{\rm +}}-J_{_k}(n_{_e}^{_{\rm +}})]&=(k-1)-[n_{_e}^{_{\rm +}}-(n_{_e}^{_{\rm +}}-\delta_{_k}(r,j)(k-1)-r+j-1)]  \\
      &=(k-1)-[\delta_{_k}(r,j)(k-1)+r-j+1] \\ &=(1-\delta_{_k}(r,j))(k-1)-r+j-1 \\
      &=J_{_k}(n_{_e}^{_{\rm +}}+1),
\end{align*} proving \eqref{jk-ne++1-new}.

\noindent {\bf (c)} If $n_{_e}^{_{\rm +}}$ is a fixed point of $J_{_k}$, then \eqref{jk-ne+} implies that $\delta_{_k}(r,j)(k-1)+r-j+1=0$. Therefore, $r-j+1=0$ and $n_{_e}^{_{\rm +}}=a_{_k}(n_{_e},j-1)=(k(n_{_e}+1)-j)/(k-1)$. Conversely, if $r-j+1=0$ and $n_{_e}^{_{\rm +}}=(k(n_{_e}+1)-j)/(k-1)$, then \eqref{jk-ne+} implies $J_{_k}(n_{_e}^{_{\rm +}})=n_{_e}^{_{\rm +}}-\delta_{_k}(r,j)(k-1)-r+j-1=n_{_e}^{_{\rm +}}-0(k-1)-0=n_{_e}^{_{\rm +}}$. Thus, $n_{_e}^{_{\rm +}}$ is a fixed point of $J_{_k}$.

\noindent {\bf (d)}  It follows from (a).
\end{proof}

The following results present a recurrence formula that allows us to compute successive high extremal points for the Josephus function $J_{_k}$. We also provide algebraic expressions for the images of $J_{_k}$ at extremal points and at an arbitrary $n$. This recurrence relation is of great importance in establishing the existence of fixed points and developing an effective extremal algorithm for evaluating the Josephus function.
\begin{corollary}[Recurrence formula for computing high extremal points]\label{L 3} Let $n_{_{e}}^{(i)}\ge 2k-3$ be a high extremal point of the Josephus function $J_{_k}$, with corresponding functional value $J_{_k}(n_{_e}^{(i)})$. Given 
\begin{equation}\label{ri}
r_{_{i}}:={\rm mod}(n_{_e}^{(i)},k-1),
\end{equation} and 
\begin{equation}\label{ci}
 c_{_{i}} := \begin{cases} 1, & \text{ if } r_{_{i}} < k-2-n_{_e}^{(i)}+J_{_k}(n_{_e}^{(i)})\\
0, & \text{ if } r_{_{i}} \geq k-2-n_{_e}^{(i)}+J_{_k}(n_{_e}^{(i)}).\end{cases}
\end{equation}
Then,  we can compute the next high extremal point 
\begin{equation}\label{nei+1}
n_{_{e}}^{(i+1)} =  \dfrac{k(n_{_e}^{(i)}+1)-(r_{_i}+1)}{k-1}-c_{_i},
\end{equation} and its functional value 
\begin{equation}\label{jk-nei+1}
J_{_k}(n_{_{e}}^{(i+1)})=(k-1)-n_{_e}^{(i)}+J_{_k}(n_{_e}^{(i)})+k \left \lfloor\dfrac{2n_{_e}^{(i)}-J_{_k}(n_{_e}^{(i)})-(k-2)}{k-1}\right \rfloor.
\end{equation}
Moreover, for any $n\in [[n_{_e}^{(i)}+1,n_{_{e}}^{(i+1)}]]$, we have
\begin{equation}\label{jk-n}
J_{_k}(n)=k(n-n_{_e}^{(i+1)})+J_{_k}(n_{_e}^{(i+1)}).
\end{equation}
\end{corollary}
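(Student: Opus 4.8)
The plan is to recognize that Corollary \ref{L 3} is simply a re-parametrization of Theorem \ref{L 2-part2}: there the recurrence is expressed through the auxiliary value $j:=J_{_k}(n_{_e}+1)$, whereas here everything must be phrased in terms of the pair $\left(n_{_e}^{(i)},J_{_k}(n_{_e}^{(i)})\right)$. The crucial bridge is therefore an explicit formula for $j$ in terms of $J_{_k}(n_{_e}^{(i)})$. First I would record that, since $n_{_e}^{(i)}\ge 2k-3$ is a high extremal point, $p:=J_{_k}(n_{_e}^{(i)})\in [[n_{_e}^{(i)}-k+2,n_{_e}^{(i)}]]$, hence $p+k\ge n_{_e}^{(i)}+2>n_{_e}^{(i)}+1$; and since $n_{_e}^{(i)}+1\ge 2k-2\ge k$, the simplified Euler formula \eqref{E2} applies in its second branch and yields
$$j=J_{_k}(n_{_e}^{(i)}+1)=p+k-(n_{_e}^{(i)}+1),\qquad\text{so}\qquad j-1=k-2-n_{_e}^{(i)}+J_{_k}(n_{_e}^{(i)}).$$

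Next I would match the constants. Writing $r:=r_{_i}$, the identity for $j-1$ above shows that the defining condition $r_{_i}<k-2-n_{_e}^{(i)}+J_{_k}(n_{_e}^{(i)})$ in \eqref{ci} is exactly $r<j-1$, so $c_{_i}=\delta_{_k}(r,j)$. The recurrence \eqref{nei+1} then follows immediately from Theorem \ref{L 2-part2}(b): using the definition \eqref{ak} of $a_{_k}$,
$$n_{_e}^{(i+1)}=a_{_k}(n_{_e}^{(i)},r)-\delta_{_k}(r,j)=\frac{k(n_{_e}^{(i)}+1)-(r_{_i}+1)}{k-1}-c_{_i}.$$

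For the image formula \eqref{jk-nei+1}, I would invoke Theorem \ref{L 2-part2}(a)--(b) at $m=m_{_0}:=\left\lfloor (n_{_e}^{(i)}-(j-1))/(k-1)\right\rfloor$, which gives $J_{_k}(n_{_e}^{(i+1)})=j+km_{_0}$. Substituting $j=J_{_k}(n_{_e}^{(i)})+k-(n_{_e}^{(i)}+1)$ converts the numerator $n_{_e}^{(i)}-(j-1)$ into $2n_{_e}^{(i)}-J_{_k}(n_{_e}^{(i)})-(k-2)$, and collecting the remaining terms produces precisely the claimed floor expression. Finally, \eqref{jk-n} follows from the linear description $J_{_k}(n_{_e}^{(i)}+1+m)=j+km$ in Theorem \ref{L 2-part2}(a) (equivalently its part (d)): writing $n=n_{_e}^{(i)}+1+m$ and subtracting the value at $m=m_{_0}$ (that is, at $n=n_{_e}^{(i+1)}$) cancels $j$ and leaves $J_{_k}(n)-J_{_k}(n_{_e}^{(i+1)})=k\bigl(n-n_{_e}^{(i+1)}\bigr)$.

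The only genuinely delicate step is the first one: confirming that high-extremality of $n_{_e}^{(i)}$ forces the wrap-around branch of \eqref{E2}, so that $j$ acquires its clean linear form; after that, the entire proof is a bookkeeping substitution into Theorem \ref{L 2-part2}. I would also note in passing that the hypothesis $n_{_e}^{(i)}\ge 2k-3$ guarantees $m_{_0}\ge 1$ by Lemma \ref{L 2}(a), so that $n_{_e}^{(i+1)}>n_{_e}^{(i)}+1$ and the interval $[[n_{_e}^{(i)}+1,n_{_e}^{(i+1)}]]$ on which linearity is asserted is nondegenerate.
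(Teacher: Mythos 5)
Your proof is correct and follows the same architecture as the paper's: both reduce the corollary to Theorem \ref{L 2-part2} by identifying $j-1$ with $k-2-n_{_e}^{(i)}+J_{_k}(n_{_e}^{(i)})$, so that $c_{_i}=\delta_{_k}(r_{_i},j)$, after which \eqref{nei+1} and \eqref{jk-nei+1} fall out of \eqref{ak}, Lemma \ref{L 2}(b), and $J_{_k}(n_{_e}^{(i+1)})=j+km_{_0}$. Where you genuinely diverge is in how the bridge identity is justified. The paper obtains $J_{_k}(n_{_e}^{(i)}+1)=(k-1)-[n_{_e}^{(i)}-J_{_k}(n_{_e}^{(i)})]$ by citing Theorem \ref{L 2-part2} (it says part (c), but the equation invoked is \eqref{jk-ne++1-new}, which sits in part (b) and is stated only for points of the form $n_{_e}^{_{\rm +}}$, i.e., successors produced by an earlier application of the theorem), whereas you derive it directly from the wrap-around branch of Euler's simplified formula \eqref{E2}: high-extremality gives $p\geq n_{_e}^{(i)}-k+2$, hence $p+k>n_{_e}^{(i)}+1$, while $n_{_e}^{(i)}\geq 2k-3$ gives $k\leq n_{_e}^{(i)}+1$. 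Your route is the more self-contained of the two: it establishes the identity for an arbitrary high extremal point $\geq 2k-3$ rather than only for one arising in the recurrence chain, and it yields $j\in[[1,k-1]]$ for free (since $n_{_e}^{(i)}-k+2\leq p\leq n_{_e}^{(i)}$), which is exactly the hypothesis under which Lemma \ref{L 2} operates. Two further small improvements: your derivation of \eqref{jk-n} by subtracting the $m=m_{_0}$ instance of $J_{_k}(n_{_e}^{(i)}+1+m)=j+km$ from the general one replaces the paper's informal ``intersection of the vertical line with the line of slope $k$'' argument with an explicit computation, and your closing observation that $m_{_0}\geq 1$ (via Lemma \ref{L 2}(a), since $n_{_e}^{(i)}\geq 2k-3\geq k-2+j$) confirms the interval $[[n_{_e}^{(i)}+1,n_{_e}^{(i+1)}]]$ is nondegenerate, a point the paper leaves implicit.
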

\begin{proof}
Assume that $n_{_e}^{(i)}$ is a given high extremal point and its corresponding value through $J_{_k}$ is known. Then, by Theorem \ref{L 2-part2}(c), $J_{_k}(n_{_e}^{(i)}+1)=(k-1)-[n_{_e}^{(i)}-J_{_k}(n_{_e}^{(i)})]$.

Set $j:=(k-1)-[n_{_e}^{(i)}-J_{_k}(n_{_e}^{(i)})].$ Notice that $j-1=k-2-n_{_e}^{(i)}+J_{_k}(n_{_e}^{(i)}).$ Find $r_{_i}$ and $c_{_i}$ by employing equations \eqref{ri} and \eqref{ci}, respectively.

Then, on the one hand, by Theorem \ref{L 2-part2}(b), the next high extremal point of $J_{_k}$, $n_{_e}^{(i+1)}$, is
    \begin{equation*}
n_{_e}^{(i+1)}=a_{_k}(n_{_e}^{(i)},r_{_i})-\delta_{_k}(r_{_i},j)  
      =\dfrac{k(n_{_e}^{(i)}+1)-(r_{_i}+1)}{k-1}-c_{_i},
\end{equation*}
which tell us that \eqref{nei+1} holds. Also, it follows from Theorem \ref{L 2-part2}(b) that:
\begin{align*}  
J_{_k}(n_{_e}^{(i+1)})&=j+k\left \lfloor \dfrac{n_{_e}^{(i)}-(j-1)}{k-1} \right \rfloor \\
      &=(k-1)-n_{_e}^{(i)}+J_{_k}(n_{_e}^{(i)})+k\left \lfloor \dfrac{2n_{_e}^{(i)}-J_{_k}(n_{_e}^{(i)})-(k-2)}{k-1} \right \rfloor,
\end{align*}
which verifies the validity of \eqref{jk-nei+1}. Now, if $n\in [[n_{_e}^{(i)}+1,n_{_e}^{(i+1)}]]$, the point $(n,J_{_k}(n))$ can be determined by finding the intersection of the vertical line at $(n,n)$ and the line with slope $k$ that passes through $(n_{_e}^{(i+1)},J_{_k}(n_{_e}^{(i+1)}))$, which gives us \eqref{jk-n}.
\end{proof}

We now include a similar recurrence formula for the low extremal points. 

\begin{corollary}[Recurrence formula for computing low extremal points]\label{L 5} Let $\check n_{_{e}}^{(i)}\ge 2k-3$ be a low extremal point of the Josephus function $J_{_k}$, with functional value $J_{_k}(\check n_{_e}^{(i)})$.
Given $\check r_{_{i}}:={\rm mod}(\check n_{_e}^{(i)},k-1)$ and 
\begin{align*}
 \check c_{_{i}} &:= \begin{cases} 1, & \text{ if }\, \check  r_{_{i}} < J_{_k}(\check n_{_e}^{(i)})\\
0, & \text{ if }\, \check r_{_{i}} \geq J_{_k}(\check n_{_e}^{(i)}).\end{cases}
\end{align*}
Then,  we can compute the next low extremal point as follows:
\begin{equation}\label{check-ne}
\check n_{_{e}}^{(i+1)} =  \dfrac{k\check n_{_e}^{(i)}-\check r_{_i}}{k-1}-\check c_{_i}+1,
\end{equation} 
and its functional value as:
\begin{equation} \label{check-Jne}
J_{_k}(\check n_{_{e}}^{(i+1)})=k-\check n_{_e}^{(i+1)}+J_{_k}(\check n_{_e}^{(i)})+k \left \lfloor\dfrac{\check n_{_e}^{(i)}-J_{_k}(\check n_{_e}^{(i)})}{k-1}\right \rfloor.
\end{equation}
Moreover, for any $n\in [[\check n_{_e}^{(i)}+1,\check n_{_{e}}^{(i+1)}]]$, we have
\begin{equation}\label{Jkn-by-check}
J_{_k}(n)= kn + J_{_k}(\check n_{_e}^{(i+1)}) - (k-1)\check n_{_e}^{(i+1)}.\end{equation}
\end{corollary}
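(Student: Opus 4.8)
The plan is to mirror the high–extremal argument of Corollary \ref{L 3}, but to run the slope-$k$ segment \emph{starting} from the low extremal point $\check n_{_e}^{(i)}$ itself. Write $\check j := J_{_k}(\check n_{_e}^{(i)})\in[[1,k-1]]$. Since $\check j+km\le \check n_{_e}^{(i)}+m$ is equivalent to $m\le(\check n_{_e}^{(i)}-\check j)/(k-1)$, Lemma \ref{L1}(a) yields $J_{_k}(\check n_{_e}^{(i)}+m)=\check j+km$ for every $m\in[[0,\check m_{_0}]]$, where $\check m_{_0}:=\lfloor(\check n_{_e}^{(i)}-\check j)/(k-1)\rfloor$ is the largest such $m$. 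I would then set $n_{_e}^{_{\rm +}}:=\check n_{_e}^{(i)}+\check m_{_0}$ and verify that it is a high extremal point: by the definition of the floor, $n_{_e}^{_{\rm +}}-J_{_k}(n_{_e}^{_{\rm +}})=\check n_{_e}^{(i)}-\check j-(k-1)\check m_{_0}\in[[0,k-2]]$, so $J_{_k}(n_{_e}^{_{\rm +}})\in[[n_{_e}^{_{\rm +}}-k+2,n_{_e}^{_{\rm +}}]]$, and moreover $n_{_e}^{_{\rm +}}\ge\check n_{_e}^{(i)}\ge 2k-3$, so Theorem \ref{L 2-part2} applies at $n_{_e}^{_{\rm +}}$.

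Next I would locate the following low extremal point. Applying \eqref{jk-ne++1-new} at $n_{_e}^{_{\rm +}}$ gives $J_{_k}(n_{_e}^{_{\rm +}}+1)=(k-1)-[n_{_e}^{_{\rm +}}-J_{_k}(n_{_e}^{_{\rm +}})]\in[[1,k-1]]$, so $n_{_e}^{_{\rm +}}+1$ is a low extremal point; since the intermediate values $\check j+km$ with $1\le m\le \check m_{_0}$ all exceed $k-1$, none of them is low, whence $\check n_{_e}^{(i+1)}=n_{_e}^{_{\rm +}}+1=\check n_{_e}^{(i)}+\check m_{_0}+1$. To obtain the closed form \eqref{check-ne} I would split on $\check r_{_i}$: writing $\check n_{_e}^{(i)}=q(k-1)+\check r_{_i}$, the fractional part of $(\check n_{_e}^{(i)}-\check j)/(k-1)$ forces $\check m_{_0}=q$ when $\check r_{_i}\ge\check j$ and $\check m_{_0}=q-1$ when $\check r_{_i}<\check j$, which is precisely $\check m_{_0}=q-\check c_{_i}$; substituting $q=(\check n_{_e}^{(i)}-\check r_{_i})/(k-1)$ produces \eqref{check-ne}. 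The functional value \eqref{check-Jne} then follows by inserting $J_{_k}(n_{_e}^{_{\rm +}})=\check j+k\check m_{_0}$ and $n_{_e}^{_{\rm +}}=\check n_{_e}^{(i)}+\check m_{_0}$ into the displayed expression for $J_{_k}(n_{_e}^{_{\rm +}}+1)$ and rewriting $\check m_{_0}$ as the stated floor.

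Finally, for the linear formula I would note that on the increasing run $J_{_k}(n)=\check j+k(n-\check n_{_e}^{(i)})$ is a line of slope $k$, and that substituting the values of $\check n_{_e}^{(i+1)}$ and $J_{_k}(\check n_{_e}^{(i+1)})$ from the previous step shows this line coincides with $kn+J_{_k}(\check n_{_e}^{(i+1)})-(k-1)\check n_{_e}^{(i+1)}$, giving \eqref{Jkn-by-check}. The step I expect to demand the most care is pinning down the exact range of validity. The slope-$k$ segment runs from $\check n_{_e}^{(i)}$ up to the intervening high extremal point $n_{_e}^{_{\rm +}}=\check n_{_e}^{(i+1)}-1$, where $J_{_k}$ peaks and then drops back into $[[1,k-1]]$ at $\check n_{_e}^{(i+1)}$; consequently \eqref{Jkn-by-check} holds on $[[\check n_{_e}^{(i)},\check n_{_e}^{(i+1)}-1]]$ and fails at $n=\check n_{_e}^{(i+1)}$ itself, so one must be careful not to include that endpoint. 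The only remaining difficulty is the floor-function bookkeeping in the residue split, which is routine provided the cases $\check r_{_i}<\check j$ and $\check r_{_i}\ge\check j$ are matched consistently with the definition of $\check c_{_i}$.
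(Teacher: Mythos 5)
Your proof is correct, and it is essentially the paper's own argument fleshed out: the paper's proof of this corollary simply asserts that \eqref{check-ne} and \eqref{check-Jne} follow ``as in Corollary \ref{L 3}'', and your run of Lemma \ref{L1}(a) starting at $\check n_{_e}^{(i)}$, the residue split $\check m_{_0}=q-\check c_{_i}$, and the drop at $n_{_e}^{_{\rm +}}+1$ are exactly that computation transplanted to the low-extremal anchor. One small point of hygiene: \eqref{jk-ne++1-new} is proved in Theorem \ref{L 2-part2}(b) only for the point $n_{_e}^{_{\rm +}}$ constructed there from a previous high extremal point, so to invoke it at your $n_{_e}^{_{\rm +}}$ you should add the one-line justification that $J_{_k}(N+1)=(k-1)-[N-J_{_k}(N)]$ holds for \emph{every} high extremal point $N$ with $k\le N+1$: since $p:=J_{_k}(N)\ge N-k+2$, one has $p+k\ge N+2>N+1$, so $m=1$ is the smallest admissible index in Lemma \ref{L1}(b) and $J_{_k}(N+1)=p+k-(N+1)$. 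More importantly, your endpoint caveat is right and exposes an off-by-one error in the statement as printed: translating \eqref{jk-n}, which is valid on $[[n_{_e}^{(i)}+1,n_{_e}^{(i+1)}]]$, through the shift $n_{_e}^{(i)}=\check n_{_e}^{(i)}-1$, $n_{_e}^{(i+1)}=\check n_{_e}^{(i+1)}-1$ and eliminating $J_{_k}(n_{_e}^{(i+1)})$ via \eqref{jk-ne++1-new} (precisely the route the paper's proof indicates) yields \eqref{Jkn-by-check} on $[[\check n_{_e}^{(i)},\check n_{_e}^{(i+1)}-1]]$, not on $[[\check n_{_e}^{(i)}+1,\check n_{_e}^{(i+1)}]]$. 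A concrete check confirms this: for $k=3$, the consecutive low extremal points $\check n_{_e}^{(i)}=9$ and $\check n_{_e}^{(i+1)}=14$ give $J_{_3}(14)=2$, so \eqref{Jkn-by-check} reads $J_{_3}(n)=3n-26$, which matches $J_{_3}$ on $[[9,13]]$ (e.g., $J_{_3}(13)=13=39-26$) but returns $16\neq 2=J_{_3}(14)$ at the right endpoint, where the function drops back into $[[1,k-1]]$. So what you have proved is the corrected statement; as written, \eqref{Jkn-by-check} fails at $n=\check n_{_e}^{(i+1)}$, and the stated interval should be $[[\check n_{_e}^{(i)},\check n_{_e}^{(i+1)}-1]]$.
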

\begin{proof}
The proof of the formulas for $\check n_{_{e}}^{(i+1)}$ and $J_{_k}(\check n_{_{e}}^{(i+1)})$ are similar to Corollary \ref{L 3}. The expression for $J_{_k}(n)$ follows directly from \eqref{jk-n} and \eqref{jk-ne++1-new}.
\end{proof}
\begin{remark}[Knuth's formula is recovered]
When $k=2$, the above corollary can be used to derive Knuth's formula for the low and high extremal (fixed) points, as well as the explicit formula for $J_{_2}(n)$ for every $n\ge 1$. To do this, we substitute $k=2$ into equations \eqref{check-ne} and \eqref{check-Jne}, which yields $\check r_{{_i}}=0$ and $\check c_{{_i}}=1$ for all $i\ge 1$. More than that, Corollary \ref{coro2.4} can be used to start a straightforward induction argument, to show that the low extremal point $\check n_{{_e}}^{(i)}=2^{i}$ and the high extremal point, which in this case coincides with a fixed point, $n_{_p}^{(i)}=\check n_{_e}^{(i)}-1=2^{i}-1$ for all $i\ge 1$. Moreover, the explicit formula for $J_{_2}(n)$ can be obtained directly from \eqref{Jkn-by-check} by observing that $J_{_2}(\check n_{_e}^{(i)})=1$ and $i=\lfloor\log_2 n\rfloor$ whenever $n_{_e}^{(i)}$ is a low extremal point and $n\ge n_{_e}^{(i)}$. Specifically, we have $J_{_2}(n)= 2n-2^{\lfloor \log_2n \rfloor+1}+1$.
\end{remark}

We are now ready to prove the existence of infinitely many fixed points for $J_{_k}$ when $k\geq 3$, which has already been established by Knuth for $k=2$. The approach we take to prove the existence of fixed points will involve applying the recurrence form of Corollary \ref{L 3} successively.  We state this formally in the following theorem.

\begin{theorem}[Existence of infinitely many fixed points]
The Josephus function $J_{_k}$ will eventually reach a fixed point.
\end{theorem}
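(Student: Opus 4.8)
The plan is to iterate the recurrence of Corollary~\ref{L 3}, launched from the first high extremal point supplied by Corollary~\ref{coro2.4}, while monitoring the \emph{defect} $d_{_i}:=n_{_e}^{(i)}-J_{_k}(n_{_e}^{(i)})\in[[0,k-2]]$. The case $k=2$ is Knuth's classical result, so assume $k\ge 3$. The first observation I would record is that a high extremal point is a fixed point precisely when its defect vanishes, and that, writing $j:=J_{_k}(n_{_e}^{(i)}+1)=(k-1)-d_{_i}$ by \eqref{jk-ne++1-new}, Theorem~\ref{L 2-part2}(c) says the \emph{next} high extremal point $n_{_e}^{(i+1)}$ is a fixed point exactly when $r_{_i}=j-1$, i.e. when $r_{_i}+d_{_i}=k-2$, where $r_{_i}=\mod(n_{_e}^{(i)},k-1)$ as in \eqref{ri}. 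Thus the theorem reduces to showing that the defect $d_{_i}$ equals $0$ for infinitely many $i$, each such index producing a fixed point $n_{_e}^{(i)}$.

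Next I would extract the dynamics of the defect. Combining \eqref{nei+1} and \eqref{jk-nei+1} (equivalently, reading off the clean identity $(k-1)(n_{_e}^{(i+1)}+1)=k(n_{_e}^{(i)}+1)+(d_{_i}-d_{_{i+1}})$ that the recurrence encodes) and reducing modulo $k-1$ yields the compact rule $d_{_{i+1}}\equiv d_{_i}+r_{_i}+1\pmod{k-1}$ with $d_{_{i+1}}\in[[0,k-2]]$. Geometrically this matches Theorem~\ref{L 2-part2}(d): each linear piece of $J_{_k}$ has slope $k$ and lies weakly below the diagonal, touching it only at a right endpoint whose defect is $0$. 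Since \eqref{nei+1} forces $n_{_e}^{(i+1)}\approx\tfrac{k}{k-1}\,n_{_e}^{(i)}$, the high extremal points grow like $(k/(k-1))^i\to\infty$, so the iteration never terminates and there are infinitely many chances for the defect to reach $0$.

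The heart of the argument, and the step I expect to be the main obstacle, is excluding that the defect stays trapped in $[[1,k-2]]$ from some index on. I would argue by contradiction, assuming $d_{_i}\ge 1$ for all $i\ge i_{_0}$. The easy sub-cases are instructive: if $(d_{_i})$ were eventually constant equal to $d$, the identity above collapses to $n_{_e}^{(i)}+1=(k/(k-1))^{\,i-i_{_0}}(n_{_e}^{(i_{_0})}+1)$, forcing $(k-1)^{\,i-i_{_0}}\mid(n_{_e}^{(i_{_0})}+1)$ for all $i$, impossible when $k\ge 3$; the same divisibility-versus-growth tension disposes of any eventually periodic $(d_{_i})$. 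The genuine difficulty is that $(d_{_i})$ need not be periodic: the update of $r_{_i}$ depends not only on the bounded pair $(r_{_i},d_{_i})$ but on the unbounded quotient $\lfloor n_{_e}^{(i)}/(k-1)\rfloor$, so the induced map is essentially a \emph{multiply by $k/(k-1)$ and round} dynamics (for $k=3$, literally the $3/2$ map), whose orbits are exactly the chaotic objects already flagged for $k\ge 3$ in the discussion around Table~\ref{t1}. To close the gap I would try to prove that the residues $r_{_i}=\mod(n_{_e}^{(i)},k-1)$ of this \emph{specific} orbit must eventually realize the target value $k-2-d_{_i}$, via a finer Diophantine/infinite-descent analysis of $n_{_e}^{(i)}$ modulo increasing powers of $k-1$. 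This residue-alignment is the crux on which the whole proof turns.
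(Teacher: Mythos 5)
Your reduction is sound and, up to notation, coincides with the paper's own setup: the defect criterion (the next high extremal point is a fixed point iff $r_{_i}=j-1$, i.e. $r_{_i}+d_{_i}=k-2$) is exactly Theorem \ref{L 2-part2}(c) combined with \eqref{jk-ne++1-new}, and your identity $(k-1)(n_{_e}^{(i+1)}+1)=k(n_{_e}^{(i)}+1)+(d_{_i}-d_{_{i+1}})$ is correct --- beware only that the paper reuses the letter $d_{_i}$ for the quantity $(1-c_{_i})-(r_{_i}+1)/(k-1)$, which is precisely your $(d_{_i}-d_{_{i+1}})/(k-1)$. Your disposal of the eventually-constant and eventually-periodic defect sequences by the divisibility-versus-growth tension is also correct. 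But the proposal stops short of a proof at exactly the point you flag yourself: you offer no argument for the general, aperiodic case, only a programme (``a finer Diophantine/infinite-descent analysis of $n_{_e}^{(i)}$ modulo increasing powers of $k-1$''). That is a genuine gap, and the route you sketch is likely a dead end: residue-alignment statements for orbits of multiply-by-$k/(k-1)$-and-round maps (for $k=3$, the $3x/2$-type dynamics you mention) are precisely the kind of result that is not available, so the theorem should not be made to depend on one.

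The paper closes the argument using no information about the residues $r_{_i}$ beyond $0\le r_{_i}\le k-2$. Assuming, for contradiction, that no term of the orbit is ever a fixed point, it telescopes the recurrence \eqref{nei+1} into the closed form \eqref{eqq 2} --- which is exactly your identity iterated --- and rewrites it in \eqref{s_m-eq1} as $n_{_e}^{(m)}=\frac{k^{m-1}}{(k-1)^{m}}\,\beta_m$, where the bounded data enter only through the coefficient bounds $-(k-1)\le (k-1)d_{_i}\le k-2$ and the geometric-sum estimate, giving the uniform two-sided bound \eqref{t_m-bounded} on $\beta_m$. The contradiction is then extracted from the integrality of every $n_{_e}^{(m)}$: since $\gcd\bigl(k^{m-1},(k-1)^{m}\bigr)=1$, keeping $n_{_e}^{(m)}$ a positive integer for all $m$ would force $\beta_m$ to grow like $(k-1)^{m}$ (or $n_{_e}^{(m)}$ to collapse toward $0$ like $k^{m-1}/(k-1)^{m}$), both incompatible with $\beta_m$ being bounded and uniformly far from $0$. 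In other words, the divisibility-versus-growth mechanism you deployed in the periodic sub-case is the entire proof: applied globally through the telescoped closed form it requires no periodicity hypothesis, so the ``residue-alignment crux'' you identified is bypassed rather than solved. You were one step away --- multiply your telescoped identity through by $(k-1)^{m}$ and bound the accumulated correction term uniformly, instead of analyzing the arithmetic of the $r_{_i}$.
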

\begin{proof}
When $k=2$ the result was proved by Knuth \cite[pp. 162 \& 184]{Knuth}. So, let us assume that $k\ge 3$. Then, if $n \geq 2k-3$ and $J_{_k}(n)$ is known, and if $n$ is either a low extremal point or a point that is not a high extremal point, then we can reach a high extremal point by using Lemma \ref{L1}. 
Without loss of generality, we can assume that $n$ is a high extremal point that is not a fixed point of $J_{_k}$, and we seek to find a fixed point.

The proof will proceed by contradiction. Assume that the repeated application of Corollary \ref{L 3} generates only high extremal points, which are no fixed points of $J_{_k}$. Let us denote $n$ as $n_{_e}^{(0)}$. So, $n_{_e}^{(0)}+1$ is a low extremal point of $J_{_k}$, and by Proposition \ref{P1}, $j_{_0}:=J_{_k}(n_{_e}^{(0)}+1)<k-1$. Let $r_{_0}\leq  k-2$ such that $n_{_e}^{(0)}\equiv r_{_0} \mod{(k-1)}$. Then, by applying Corollary \ref{L 3}, the integer $n_{_e}^{(1)}$ is a high extremal point. Based on our general assumption  $r_{_0}\neq j_{_0}-1$. Notice that $n_{_e}^{(1)}$ can be written as follows:
\begin{equation*}
n_{_e}^{(1)}=\alpha(n_{_e}^{(0)}+1)-\dfrac{r_{_0}+1}{k-1}-c_{_0} \notag 
      =\alpha(n+1)+d_{_0}-1, \label{eqq 1}
\end{equation*}
where $\alpha := k/(k-1)$ and $d_{_0}:=(1-c_{_0})-(r_{_0}+1)/(k-1)$. Based on our assumption and on Corollary \ref{L 3}, an infinite sequence $\{n_{_e}^{(m)}\}_{m\in\mathbb{N}}$ of high extremal points can be generated, where none of its terms is a fixed point such that  $j_{_m}:=J_{_k}(n_{_e}^{(m)}+1)<k-1$ (a low extremal point), $n_{_e}^{(m)} \equiv r_{_m} \mod{(k-1)}$ ($r_{_m}\neq j_{_m}-1$), and for $m\geq 1$:
\begin{align}
n_{_e}^{(m)}&=\alpha^m(n_{_e}^{(0)}+1)+(\alpha^{m-1}d_{_0}+ \cdots + d_{_{m-1}})-1 \notag \\
&= \alpha^m(n+1)+\left ({\displaystyle \sum_{i=0}^{m-1}\alpha^{m-1-i}d_i} \right )-1,\label{eqq 2}
\end{align}
where $d_{_i}:=(1-c_{_i})-(r_{_i}+1)/(k-1)$.
Therefore, \eqref{eqq 2} holds for all $m\geq 1$. Now, observe that it can be rewritten as follows:
\begin{align}  \nonumber
n_{_e}^{(m)}&= \alpha^{m-1}\left(\alpha(n+1)+ \sum_{i=0}^{m-1}\dfrac{d_{_i}}{\alpha^i}-\frac{1}{\alpha^{m-1}}\right)\\ \nonumber
&= \frac{k^{m-1}}{(k-1)^{m-1}}\left(\dfrac{k}{k-1}(n+1)+ \dfrac{1}{k-1}\sum_{i=0}^{m-1}\dfrac{(k-1)d_{_i}}{\alpha^i}-\frac{(k-1)^{m-1}}{k^{m-1}}\right) \\
&=\frac{k^{m-1}}{(k-1)^{m}}\left(k(n+1)+\sum_{i=0}^{m-1}(k-1)d_{_i}\left(\dfrac{k-1}{k}\right)^i-\frac{(k-1)^{m}}{k^{m-1}}\right)\label{s_m-eq1}.
\end{align} Note further that $(k-1)d_{_i}=(k-1)(1-c_{_i})-r_{_i}-1,$ where $r_{_i}\in [[0,k-2]]$ and $c_{_i}\in\{0,1\}$. Hence, if we define
$$\beta_m:=k(n+1)+\sum_{i=0}^{m-1}(k-1)d_{_i}\left(\dfrac{k-1}{k}\right)^i-\frac{(k-1)^{m}}{k^{m-1}},$$ then, for all $m\geq 1$, $\beta_m$ satisfies:
\begin{equation}\label{t_m-bounded}
1\le k(n-k+2)-1\le \beta_m \le k(n+k-1),
\end{equation} where we have used that $-(k-1)\le(k-1)d_{_i}\le k-2$, the fact that $n\ge 2k-3$ implies that $n\ge k-1$ because $k\ge 3$, and $$\sum_{i=0}^{m-1}\left(\dfrac{k-1}{k}\right)^i=\displaystyle\dfrac{1-\left(\dfrac{k-1}{k}\right)^{m}}{1-\left(\dfrac{k-1}{k}\right)}\le k.$$
Using now \eqref{s_m-eq1}, we get
\begin{equation*}
n_{_e}^{(m)}=\frac{k^{m-1}}{(k-1)^{m}}\,\cdot \beta_m,
\end{equation*}and hence, $n_{e}^{(m)}$ is a positive integer number for all $m$, if $\beta_m$ grows as $(k-1)^m$ for $m$ large, or it approaches $0$ as $\frac{(k-1)^{m}}{k^{m-1}}$. These two possibilities do not occur because \eqref{t_m-bounded} guarantees that $\beta_m$ is uniformly far from $0$ and bounded. Thus, for some $m_0 \geq 1$, $r_{m_0}$ coincides with $j_{m_0}-1$, and $n_{e}^{(m_0)}$ is a fixed point by Theorem \ref{L 2-part2}(c), proving the result.
\end{proof}

\section{Numerical Experiments}

In this section, we present a numerical comparison of four algorithms to evaluate the Josephus function. The computational experiments were carried out on an iMac 3.6 GHz 10-Core Intel Core i9 with 32GB of RAM. The algorithms were implemented in the Julia programming language v1.8.

\subsection{The algorithms}

First, let's describe three well-known methods for solving the Josephus problem (Euler, Knuth and Uchiyama algorithms); see, for instance, \cite{Gra, Uch,Woo}. Moreover, we propose a novel scheme for evaluating the Josephus function called the Extremal algorithm.

\subsubsection{Euler algorithm}

The Euler algorithm is a natural interpretation of the recurrence relation proposed first in \cite[\& 8, pp. 130-131]{Eul}. 
In \cite[pp. 57]{Woo}, Woodhouse gave the following modern algorithmic description:
\begin{equation*}
J_{_k}(n)=((\cdots ((1+k)_{(2)}+k)_{(3)}+\cdots +k)_{(n-1)}+k)_{(n)}
\end{equation*}
where for any positive integers $r$ and $s$ the symbol $(r)_{(s)}$ denotes the integer satisfying: $(r)_{(s)}\equiv r \mod{s}$ and  $1\leq (r)_{(s)} \leq s$. This scheme clearly has an intrinsic recursive behavior and complexity $O(n)$, \emph{i.e.}, there is necessary about $n$ function evaluations to reach the solution. 

To compute the Josephus function $J_{_k}$ at $n$, the Euler algorithm requires evaluating $J_{_k}$ for all preceding values, that is, $n-1$ evaluations of $J_{_k}$. 

\subsubsection{Knuth algorithm}

In equation (3.19) from \cite[Chapter 3, Section 3, pp. 81]{Gra} Knuth presents the following algorithm: 
\begin{align*}
& D := 1; \\
& \text{While } D\leq (k-1)n \text{ do } D := \left\lceil \dfrac{k}{k-1}D \right \rceil; \\ 
& J_{_k}(n) = kn+1-D. 
\end{align*}
Knuth algorithm significantly reduces the number of evaluations required to compute $J_{_{k}}(n)$ compared to the Euler algorithm. Instead of the recursive computation of the Euler algorithm, the Knuth algorithm only requires $O(\ln(n))$ evaluations before computing $J_{_k}(n)$. This results in a more efficient computation process than Euler's.

\subsubsection{Uchiyama algorithm}

In equation (11) from \cite[Section 4, pp. 329]{Uch}, Uchiyama presents the following algorithm:

Let $n_{_1}=1$, $$c_{_1}^{\star}=c_{_1}:=J_{_k}(2)=\begin{cases} 1, & \text{ if } k \text{ is even }\\
2, & \text{ if } k \text{ is odd }.\end{cases}$$ For $i\geq 1$ compute:
\begin{equation*}
\begin{aligned}
& n_{_{i+1}} =  \left\lfloor \dfrac{k(n_{_i}+1)-c_{_i}}{k-1} \right \rfloor;  \\
& c^{\star}_{_{i+1}}:=c_{_i}+(k-1)(n_{_{i+1}}+1)-k(n_{_i}+1); \\ 
& c_{_{i+1}} \equiv \begin{cases} c^{\star}_{_{i+1}} \mod {(n_{_{i+1}}+1)}, & \text{ if } 1\leq c_{_{i+1}}\leq n_{_{i+1}}+1\\ c^{\star}_{_{i+1}}, & \text{otherwise}.\end{cases}  
\end{aligned}
\end{equation*}
If $n_{_i}< n \leq n_{_{i+1}}$, $
J_{_k}(n)=c_{_i}+k(n-n_{_i}-1)$.

Uchiyama algorithm, like the Knuth algorithm, improves to $O(\ln(n))$ the number of evaluations required to compute $J_{_{k}}(n)$ compared to the Euler algorithm. 

\subsubsection{Extremal algorithm}

To evaluate the Josephus function $J_{_k}$, we proposed the Extremal algorithm, which employs a strategy of computing recursively high extremal points $n^{(i)}_{_e}$ $(i=1,2,\ldots,m)$ until $n^{(m)}_{_e}$ is greater than or equal to $n$; see Corollary \ref{L 3}. This approach capitalizes on the linear piecewise structure of the Josephus function, enabling the Extremal algorithm to efficiently compute $J_{_k}(n)$ compared to other methods.

The Extremal algorithm for evaluating $J_{_k}$ at $n\ge 2k-3$ can be described as follows. We start by defining $j:=J_{_k}(2k-2)$ and our first high extremal point $n_{_e}^{(1)}$ and $J_{_k}(n_{_e}^{(1)})$ as follows: 
$$\left(n_{_e}^{(1)},J_{_k}(n_{_e}^{(1)})\right):=\begin{cases} (2k-3,j+k-2), & \text{ if  } j \leq k-1\\
(2k-2,j), & \text{ if  } j > k-1.\end{cases}$$ 
\begin{equation*}
\left.\begin{aligned}
& r_{_{i}}:={\rm mod}(n_{_e}^{(i)},k-1); \\ 
& c_{_{i}} := \begin{cases} 1, & \text{ if } r_{_{i}} < k-2-n_{_e}^{(i)}+J_{_k}(n_{_e}^{(i)})\\
0, & \text{ if } r_{_{i}} \geq k-2-n_{_e}^{(i)}+J_{_k}(n_{_e}^{(i)});\end{cases}
\\ 
& J_{_k}(n_{_{e}}^{(i+1)})=(k-1)-n_{_e}^{(i)}+J_{_k}(n_{_e}^{(i)})+k \left \lfloor\dfrac{2n_{_e}^{(i)}-J_{_k}(n_{_e}^{(i)})-(k-2)}{k-1}\right \rfloor; \\ 
& n_{_{e}}^{(i+1)} =  \dfrac{k(n_{_e}^{(i)}+1)-(r_{_i}+1)}{k-1}-c_{_i}. 
\end{aligned}\right \}
\end{equation*}
If $n_{_e}^{(i)}< n \leq n_{_e}^{(i+1)}$, then $
J_{_k}(n)=J_{_k}(n_{_e}^{(i+1)})+k(n-n_{_e}^{(i+1)}).
$



The Extremal algorithm is also more efficient than Euler algorithm. Like the Knuth and Uchiyama algorithms, it requires $O(\ln(n))$ evaluations before computing $J_{_k}(n)$. 
\subsection{A Comparative Computational Study} 
To compare the performance of all algorithms, we conducted extensive numerical experiments by measuring their CPU time. We employed a Performance Profile testing a total of $452500$ problems, uniformly distributed for $n \in [50000:100:100000]$ and $k \in [50:10:1000]$. We compared the Extremal algorithm with three other algorithms, namely Euler, Knuth, and Uchiyama, and our results indicate that the Extremal algorithm outperformed the other three in almost all problem instances (see Figure \ref{f2}), making it an excellent choice for solving the Josephus problem for large inputs.
\begin{figure}[!h]
    \centering
    \begin{subfigure}[b]{0.49\textwidth}
    \includegraphics[width=1\textwidth]{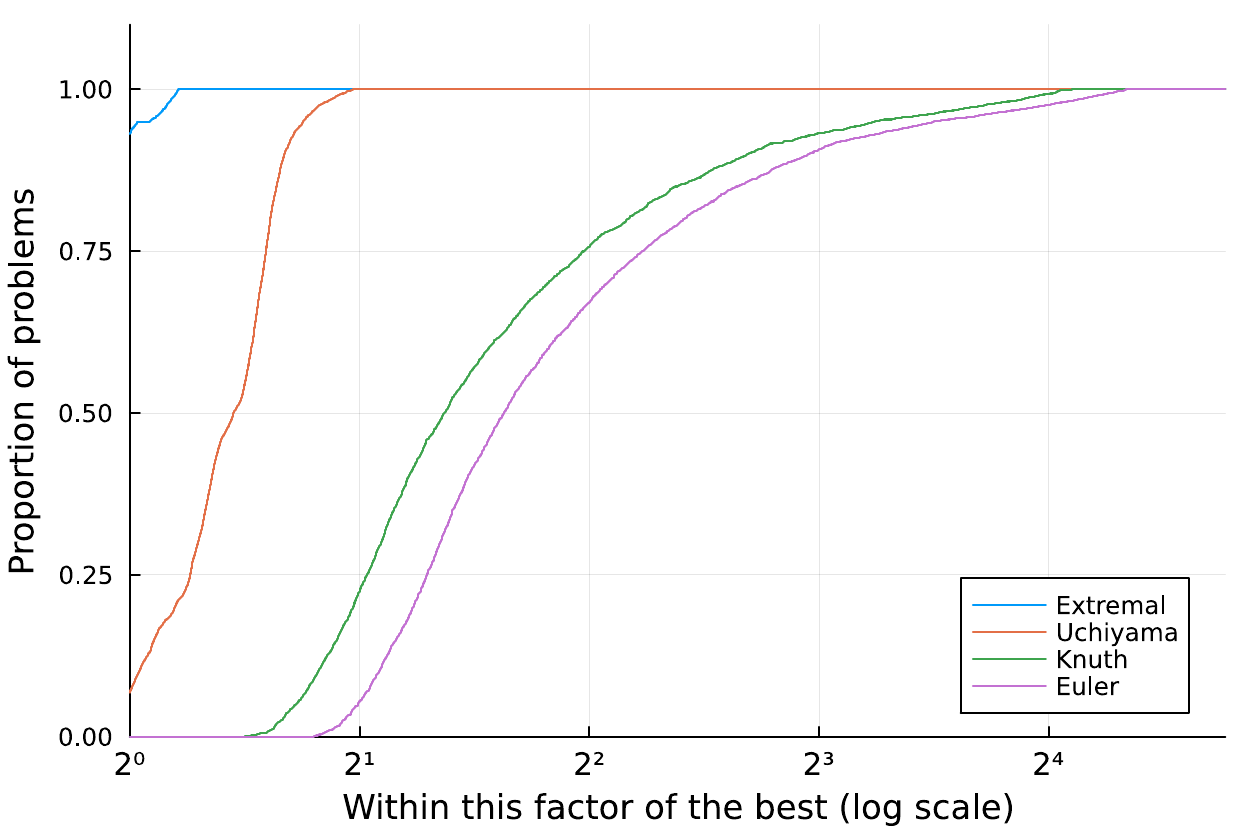}
    \caption{PP: All Methods}
    \label{fig:2.1}
    \end{subfigure}
    \hfill
    \begin{subfigure}[b]{0.49\textwidth}
    \includegraphics[width=1\textwidth]{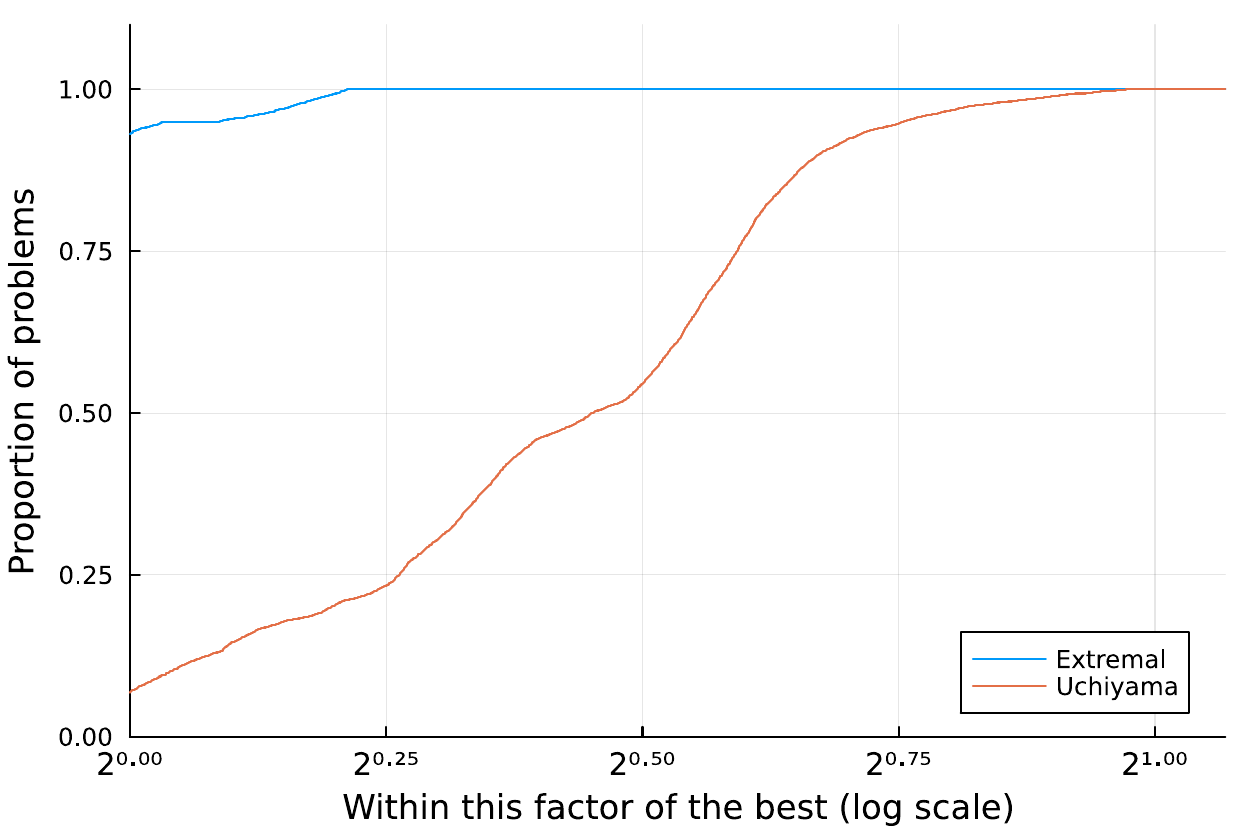}
    \caption{PP: Extremal vs Uchiyama}
    \label{fig:2.2}
    \end{subfigure}
    \hfill
    \caption{Performance Profiles (PP) for $n\in[50000:100:100000], k\in[50:10:1000]$}
    \label{f2}
    \end{figure}

To provide a more detailed analysis of our results, we present Table \ref{t2}, which summarizes descriptive statistics of the benchmark of problems, including the minimum (min), mean,  maximum (max), and standard deviation (std) of total CPU time. The Extremal algorithm consistently performed better than Euler, Knuth, and Uchiyama algorithms, indicating its superiority for solving the Josephus problem. 
\begin{table}[!h]
  \caption{\label{tab:PolySOC_tau=0.25} Statistics of the experiments} \label{t2}
  \centering 
  \begin{tabular}{l|l | l | l | l  }
{\bfseries Algorithms} \quad \quad  & 
{\bfseries min} \quad  &{\bfseries mean} &   
    {\bfseries max} \quad & {\bfseries std} \quad  \\
    \hline
    Euler  & 0.000813209 & 0.00120984 & 0.00161632& 0.000231202\\ 
    Knuth  & 0.000659197 & 0.00101147 & 0.00137245& 0.00019701\\ 
    Uchiyama  & 7.234e-5 & 0.000518487 & 0.0010305& 0.000246989\\ 
    Extremal  & 7.6873e-5 & 0.000367613 & 0.000633953& 0.000144699\\ 
\end{tabular}
  \end{table}

\section{Concluding Remarks}
In this paper, we presented a novel study characterizing the Josephus function structure. We use the function's piecewise linear structure to identify extremal points (including fixed points) of the Josephus function, $J_{_k}$, via a recurrence formula. We have developed an efficient algorithm for evaluating $J_{_k}$ for large values of $n$ based on the successive computation of the high extremal points of $J_{_k}$. The effectiveness of the proposed scheme was validated through its comparison to established algorithms. The results of the comparative study demonstrate the remarkable performance of the proposed approach in computing the Josephus function for large inputs. It is noteworthy that we can employ the recurrence formula to calculate low extremal points, and a similar approach can be designed to address the Josephus problem. The analytical study presented in this paper can have substantial practical implications for applications such as scheduling, network optimization, and distributed algorithms. However, finding a recurrence formula for computing fixed points, analogous to the one we obtained for extremal points, remains an open problem. Addressing this problem may lead to further insights and improvements in solving the Josephus problem.

\bigskip
\hrule
\bigskip
\noindent 2010 {\it Mathematics Subject Classification}: Primary 65Q30;
 Secondary 11Y55, 11B50.

\noindent \emph{Keywords:}  Josephus Function, Fixed Points, Extremal Points, Piece-wise Linear Structure.

\bigskip
\hrule
\bigskip

\noindent (Concerned with sequences
\seqnum{A000225}
and
\seqnum{A182459}.)

\bigskip
\hrule
\bigskip

\vspace*{+.1in}
\noindent

\bigskip
\hrule
\bigskip

\noindent

\vskip .1in

\end{document}